\documentclass[12pt]{amsart}

\usepackage[a4paper,margin=1.15in]{geometry}
\usepackage{amsmath,amssymb,amsthm,amsfonts,mathrsfs}
\usepackage{enumitem}
\usepackage[hidelinks]{hyperref}
\usepackage{mathtools}
\usepackage{cite}

\numberwithin{equation}{section}

\newtheorem{theorem}{Theorem}[section]
\newtheorem{lemma}[theorem]{Lemma}
\newtheorem{proposition}[theorem]{Proposition}
\newtheorem{corollary}[theorem]{Corollary}

\newtheorem{remark}[theorem]{Remark}
\newtheorem{hypothesis}[theorem]{Hypothesis}

\usepackage{textcase}

\title{Power Products in Elliptic Divisibility Sequences and Prime-Incidence Obstructions}
\author{Dongyeon Kym}

\address{Department of Mathematics, Dongguk University, Seoul 04620, Republic of Korea}
\email{kynteger@dgu.ac.kr}

\subjclass[2020]{11G05, 11B37, 11D45, 11D61}
\keywords{elliptic divisibility sequences; power products; perfect powers; primitive divisors; valuation laws; prime-index incidence.}

\begin{document}
	
	\maketitle
	
	\begin{abstract}
	Let \(E/\mathbb Q\) be an elliptic curve, let \(P\in E(\mathbb Q)\) be non-torsion, and let \((D_n)\) be the associated elliptic divisibility sequence. For a fixed prime \(\rho\), we study when an arbitrary finite product
	\[
	\prod_{i=1}^k D_{n_i}
	\]
	can be a \(\rho\)-th power in \(\mathbb Q^\times\). The main result is that, under the hypothesis that \(D_1\) is divisible by \(2\) or \(3\), such product relations impose rigid restrictions on the large prime divisors of the indices \(n_i\). More precisely, for every \(B\ge 2\), all sufficiently large prime divisors \(\ell\) which occur as simple largest prime divisors of the indices and whose complementary cofactors are \(B\)-smooth must occur in \(\rho\)-balanced blocks. Equivalently, the corresponding prime-incidence rows over \(\mathbb F_\rho\) have pairwise disjoint supports, are linearly independent, and satisfy the packing bound
	\[
	|\Lambda^*|\le \lfloor k/\rho\rfloor .
	\]
	In particular, if \(n_i=\ell_i a_i\), where the \(\ell_i\) are sufficiently large primes and the \(a_i\) are \(B\)-smooth, then a \(\rho\)-th power product relation can hold only if each prime \(\ell\) occurs among the \(\ell_i\) with multiplicity divisible by \(\rho\).
	
	The proof combines Silverman's valuation law, a fixed finite-prime-set consequence of Reynolds' finiteness theorem, and the Hasse bound. The case \(\rho=2\) gives the corresponding square-product obstruction.
	\end{abstract}
	
	\section{Introduction}
	
	Let \(E/\mathbb Q\) be an elliptic curve given by a global minimal Weierstrass equation, and let \(P\in E(\mathbb Q)\) be a non-torsion point.
	For each \(n\geq 1\), write
	\[
	x([n]P)=\frac{A_n}{D_n^2},
	\qquad A_n,D_n\in\mathbb Z,\quad D_n>0,\quad (A_n,D_n)=1.
	\]
	The sequence \((D_n)_{n\geq 1}\) is the elliptic divisibility sequence attached to \((E,P)\), introduced by Ward~\cite{Ward1948}.
	It is the elliptic analogue of the divisibility sequences arising from linear recurrences, but its local arithmetic is governed by the reduction of \(P\) modulo primes.
	The \(p\)-adic valuation law of Silverman~\cite{Silverman2005}, recalled below, is the basic local input throughout the paper.
	
	Perfect-power questions in divisibility sequences are traditionally tied to primitive divisors.
	In the classical setting this goes back to Carmichael~\cite{Carmichael1913} and culminates, for Lucas and Lehmer numbers, in the theorem of Bilu, Hanrot and Voutier~\cite{BiluHanrotVoutier2001}; see also Everest, van der Poorten, Shparlinski and Ward~\cite{EverestPoortenShparlinskiWard2003}.
	In the elliptic setting, primitive divisor questions were studied by Everest, McLaren and Ward~\cite{EverestMcLarenWard2006}, while perfect-power questions and related denominator problems were studied by Everest, Reynolds and Stevens~\cite{EverestReynoldsStevens2007}, Reynolds~\cite{Reynolds2012}, Alfaraj~\cite{Alfaraj2023}, and Nowroozi and Siksek~\cite{NowrooziSiksek2024}.
	Products of EDS terms have also been studied, notably by Hajdu, Laishram and Szikszai~\cite{HajduLaishramSzikszai2016} for products taken over arithmetic progressions.
	
	The purpose of this paper is to isolate a mechanism by which local arithmetic on the elliptic curve controls global product relations among EDS terms. We do not try to classify perfect powers among the
	individual terms \(D_n\). Instead, we start from an arbitrary prime-power product relation
	\[
	\prod_{i=1}^k D_{n_i}\in(\mathbb Q^\times)^\rho
	\]
	and ask what this relation forces on the prime divisors of the indices \(n_i\).
	
	The resulting obstruction is incidence-theoretic, but its origin is arithmetic-geometric. Outside a fixed finite set of primes, the valuation law for elliptic divisibility sequences expresses \(v_p(D_n)\) in terms of the order of the reduction of \(P\) modulo \(p\). At a prime index \(\ell\), a prime divisor of \(D_\ell\) therefore detects the occurrence of \(\ell\) among the indices \(n_i\). The product relation then imposes a congruence condition on
	the corresponding incidence vector over \(\mathbb F_\rho\).
	
	More concretely, if
	\[
	I_\ell(\mathbf n)=\{i:\ell\mid n_i\},
	\]
	and if \(p\notin S_{E,P}\) divides \(D_\ell\), then the reduction of \(P\) modulo \(p\) has exact order \(\ell\).  Hence a relation
	\[
	\prod_{i=1}^k D_{n_i}\in(\mathbb Q^\times)^\rho
	\]
	forces the congruence
	\[
	|I_\ell(\mathbf n)|v_p(D_\ell)
	+\sum_{i\in I_\ell(\mathbf n)}v_p(n_i/\ell)
	\equiv 0 \pmod{\rho}.
	\]
	This is the basic absorption congruence. In vector form, it says that prime-power product relations impose linear restrictions over \(\mathbb F_\rho\) on the incidence pattern of prime divisors among the indices. This conversion of product relations into incidence constraints is the main organising principle of the paper.
	
	To make this local obstruction effective, one needs prime divisors of \(D_\ell\) whose valuations are nonzero modulo \(\rho\).
	We obtain these from a fixed-\(S\) consequence of Reynolds' finiteness theorem.
	Under the hypothesis that \(D_1\) is divisible by \(2\) or \(3\), we show that for each fixed prime \(\rho\) and all sufficiently large prime indices
	\(\ell\),
	\[
	\operatorname{rad}_{S_{E,P},\rho}(D_\ell)>1.
	\]
	Consequently, by the valuation law at a prime index, \(D_\ell\) has a primitive prime divisor \(p\notin S_{E,P}\) such that
	\[
	\rho\nmid v_p(D_\ell).
	\]
	We call such a prime a \(\rho\)-detecting primitive divisor.
	
	Thus Reynolds' theorem enters the argument only through this fixed-\(S\), fixed-exponent consequence. Its sole role is to ensure the existence, for all sufficiently large prime indices \(\ell\), of a prime divisor of \(D_\ell\) outside \(S_{E,P}\) whose valuation is nonzero modulo \(\rho\).
	The incidence, absorption, clustering, and large-prime-gap obstructions proved below are otherwise consequences of the local valuation law and the Hasse bound.
	
	This separation of inputs is important for the structure of the argument.
	The local incidence results are proved under the sole detecting-prime input that, for all sufficiently large prime indices \(\ell\), the term \(D_\ell\) has a prime divisor outside \(S_{E,P}\) whose valuation is nonzero modulo \(\rho\).
	Reynolds' theorem is used only to verify this input under the hypothesis that \(D_1\) is divisible by \(2\) or \(3\). Thus the contribution of the paper is not a new perfect-power finiteness theorem for individual EDS terms, but rather a mechanism which converts arbitrary prime-power product relations into incidence, absorption, and packing restrictions on the prime divisors of the indices.
	
	Combining this detecting prime with the absorption congruence gives the first support obstruction.
	If
	\[
	\rho\nmid |I_\ell(\mathbf n)|,
	\]
	then, for all sufficiently large prime \(\ell\),
	\[
	\operatorname{rad}_{S_{E,P},\rho}(D_\ell)
	\mid
	\prod_{i\in I_\ell(\mathbf n)}\frac{n_i}{\ell}.
	\]
	Moreover, if \(q\mid \operatorname{rad}_{S_{E,P},\rho}(D_\ell)\), then the reduction of \(P\) modulo \(q\) has exact order \(\ell\), and therefore
	\[
	\ell\mid \#E(\mathbb F_q).
	\]
	The Hasse bound gives
	\[
	\ell \leq q+1+2\sqrt q.
	\]
	Thus local valuation information at \(D_\ell\) forces prime divisors of Hasse scale to occur inside the quotient indices.
	
	This formulation separates the present work from the usual perfect-power problem for individual EDS terms. Rather than starting with a single term \(D_n\), or with a product in a prescribed family, we start with an arbitrary \(\rho\)-th power product relation and ask how the prime divisors of the indices can occur. The relevant object is therefore the incidence matrix
	\[
	M_\Lambda(\mathbf n)
	=(\delta_{\ell\mid n_i})_{\ell\in\Lambda,\,1\leq i\leq k}
	\in \operatorname{Mat}_{|\Lambda|\times k}(\mathbb F_\rho),
	\]
	whose rows record the occurrence of large prime divisors \(\ell\) among the indices.
	The main theorem gives rank and packing restrictions on this matrix: under top-prime and smooth-cofactor hypotheses, its nonzero rows are forced to have pairwise disjoint supports, to be linearly independent over \(\mathbb F_\rho\), and to have cardinality divisible by \(\rho\).
	
	Using Reynolds' finiteness theorem under the hypothesis that \(D_1\) is divisible by \(2\) or \(3\), we obtain the following cluster-packing obstruction. Fix a prime \(\rho\) and \(B\ge 2\). Then there exists a constant
	\[
	C_{\rho,B}=C_{\rho,B}(E,P)
	\]
	such that the following holds. If
	\[
	\prod_{i=1}^k D_{n_i}\in(\mathbb Q^\times)^\rho
	\]
	and \(\Lambda\) is a finite set of primes \(\ell>C_{\rho,B}\) such that, whenever \(\ell\mid n_i\), the prime \(\ell\) is a simple largest prime divisor of \(n_i\) and \(n_i/\ell\) is \(B\)-smooth, then
	\[
	\rho\mid |I_\ell(\mathbf n)|\qquad(\ell\in\Lambda).
	\]
	Thus large top prime divisors of the indices cannot occur with arbitrary multiplicity in a \(\rho\)-th power product relation. Under the top-prime hypotheses, the corresponding nonzero incidence rows are supported on disjoint blocks, are linearly independent over \(\mathbb F_\rho\), and satisfy the packing bound
	\[
	|\Lambda^*|\le \lfloor k/\rho\rfloor .
	\]
	
	A particularly simple consequence is obtained when the indices have the form \(n_i=\ell_i a_i\), where the \(\ell_i\) are large primes and the \(a_i\) are \(B\)-smooth.  Then a prime-power product relation can
	hold only if each prime \(\ell\) occurring among the \(\ell_i\) occurs with multiplicity divisible by \(\rho\).  In particular, if the large top primes \(\ell_1,\ldots,\ell_k\) are pairwise distinct, then
	\[
	\prod_{i=1}^k D_{\ell_i a_i}\notin(\mathbb Q^\times)^\rho .
	\]
	Thus the obstruction gives a direct exclusion for a natural family of EDS product relations, not merely a reformulation of local valuation constraints.
	
	When \(\rho=2\), the same mechanism recovers a square-product obstruction.
	In this case
	\[
	\operatorname{rad}_{S_{E,P},2}(D_\ell)
	=
	\operatorname{sqf}_{S_{E,P}}(D_\ell),
	\]
	and the condition \(\rho\nmid |I_\ell(\mathbf n)|\) becomes the parity condition that \(|I_\ell(\mathbf n)|\) is odd.  Thus the square-product case is not an isolated parity phenomenon; it is the special case \(\rho=2\) of a congruence obstruction modulo a prime exponent.
	
	The paper is organised as follows.
	Section~2 recalls the valuation law for elliptic divisibility sequences and introduces power radicals outside
	a finite set of primes.
	Section~3 proves the prime-index absorption
	congruence and its incidence-vector refinement.
	Section~4 extracts from Reynolds' work the fixed-\(S\) finiteness input needed to produce \(\rho\)-detecting primitive divisors at sufficiently large prime indices.
	Section~5 combines these ingredients to prove the prime-support obstruction, the large top-prime cluster-packing theorem, the repeated top-prime obstruction, the coprime two-term large-prime-gap exclusion, and the square-product corollary.
	
	\section{Preliminaries on elliptic divisibility sequences}
	
	\subsection{Elliptic divisibility sequences and the valuation law}
	
	Let $E/\mathbb{Q}$ be an elliptic curve given by a global minimal Weierstrass equation, and let $P \in E(\mathbb{Q})$ be a non-torsion point.
	
	For each integer $n \ge 1$, write
	\[
	x([n]P) = \frac{A_n}{D_n^2}, \qquad A_n, D_n \in \mathbb{Z}, \quad D_n > 0, \quad (A_n, D_n)=1.
	\]
	Then \((D_n)_{n\ge 1}\) is the elliptic divisibility sequence (EDS) attached to \((E,P)\).
	
	\begin{theorem}[{\kern-5pt\cite[Theorem 1.1]{Silverman2005}}]\label{thm:EDS-valuation-law}
		Let \(E/\mathbb Q\) be an elliptic curve given by a global minimal Weierstrass equation, and let \(P\in E(\mathbb Q)\) be a non-torsion point. Let \((D_n)\) be the associated elliptic divisibility sequence. Then there exists a finite set of rational primes \(S_{E,P}\), depending only on the pair \((E,P)\), such that for every prime \(p\notin S_{E,P}\) the following holds.
		
		Let
		\[
		r_p:=\operatorname{ord}(\widetilde P)\in\mathbb Z_{\ge 1}
		\]
		be the order of the reduction of \(P\) in \(E(\mathbb F_p)\). Then for every integer
		\(n\ge 1\):
		\begin{enumerate}
			\item \(p\mid D_n\) if and only if \(r_p\mid n\);
			\item if \(r_p\mid n\), then
			\[
			v_p(D_n)=v_p(D_{r_p})+v_p\!\left(\frac{n}{r_p}\right).
			\]
		\end{enumerate}
	\end{theorem}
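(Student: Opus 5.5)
The plan is to deduce the law from the structure of the reduction map and the formal group of \(E\) over \(\mathbb Z_p\). I would take \(S_{E,P}\) to be the primes of bad reduction of the global minimal model, together with \(p=2\). Removing \(2\) avoids the convergence issue for the formal logarithm when \(p-1=1\).

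Fix \(p\notin S_{E,P}\). Then \(E\) has good reduction at \(p\), so \(\widetilde P\in\widetilde E(\mathbb F_p)\) is a nonsingular point of finite order \(r_p\). Let \(E_1(\mathbb Q_p)\) be the kernel of reduction. In the minimal model, a point \(Q=(x,y)\in E(\mathbb Q_p)\) lies in \(E_1\) exactly when \(v_p(x)<0\). In that case \(v_p(x)=-2s\) and \(v_p(y)=-3s\) for some \(s\ge1\), and the formal parameter \(z=-x/y\) satisfies \(v_p(z)=s\). For \(Q=[n]P\) with \(x=A_n/D_n^2\) in lowest terms, this means
\[
p\mid D_n \iff [n]P\in E_1(\mathbb Q_p),
\]
and in that case \(v_p(D_n)=v_p(z([n]P))\). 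Since \([n]P\in E_1\) exactly when \([n]\widetilde P=0\), which is exactly when \(r_p\mid n\), part (1) follows.

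For part (2), I would write \(n=r_p m\) and \(Q=[r_p]P\in E_1(\mathbb Q_p)\), so that \(v_p(z(Q))=v_p(D_{r_p})\ge1\). I then use the standard isomorphism \(E_1(\mathbb Q_p)\cong\widehat E(p\mathbb Z_p)\) with the formal group. For \(p\) odd, the formal logarithm \(\log_{\widehat E}\) gives an isomorphism \(\widehat E(p^k\mathbb Z_p)\cong p^k\mathbb Z_p\) for every \(k\ge1\), because \(k\ge1>1/(p-1)\). Moreover \(v_p(\log_{\widehat E}(z))=v_p(z)\) on this range. Hence
\[
v_p\bigl(z([m]Q)\bigr)=v_p\bigl(m\log_{\widehat E}z(Q)\bigr)=v_p(m)+v_p(z(Q)).
\]
Translating back gives \(v_p(D_n)=v_p(D_{r_p})+v_p(n/r_p)\).

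The main technical point is the valuation-preserving property of the formal logarithm. It follows from writing \(\log(z)=\sum_{j\ge1} c_j z^j/j\) with \(c_j\in\mathbb Z_p\) and \(c_1=1\), and bounding \(v_p(z^j/j)>v_p(z)\) for \(j\ge2\) when \(v_p(z)\ge1\) and \(p\ge3\). Everything else is bookkeeping between \(x\)-denominators and \(z\)-valuations in the minimal model. This bookkeeping is where the minimality hypothesis is used: it guarantees that \(E_1\) is exactly the set of points with non-integral \(x\).
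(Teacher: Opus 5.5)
The paper gives no proof of this statement; it imports it from Silverman, so there is no internal argument to compare against. Your proof is the standard formal-group derivation of the valuation law, and it is correct. At a prime of good reduction, $p\mid D_n$ if and only if $[n]P\in E_1(\mathbb Q_p)$, and then $v_p(D_n)=v_p(z([n]P))$. For odd $p$, the estimate $js-v_p(j)>s$ for $j\ge 2$, $s\ge 1$ gives $v_p(\log_{\widehat E} z)=v_p(z)$ on $\widehat E(p\mathbb Z_p)$. Combined with $\log_{\widehat E}([m]z)=m\log_{\widehat E}z$, this yields the additive formula.

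Compared with the bare citation, your argument buys an explicit exceptional set (the bad primes together with $2$) that depends only on $E$. It also handles the case $r_p=1$ directly. So the paper's later enlargement of $S_{E,P}$ by primes with $\widetilde P=O$ is not needed for the valuation law itself; it is used for Lemma~\ref{lem:reduction-order}.

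Excluding $2$ is genuinely necessary, not just convenient. Take $y^2+xy+y=x^3-x^2$ (discriminant $-53$) and $P=(0,0)$. Then $x([2]P)=1$, $x([3]P)=2$, $x([4]P)=-1/4$ and $x([8]P)=369/64$. Hence $r_2=4$ and $v_2(D_4)=1$, yet $v_2(D_8)=3\neq v_2(D_4)+v_2(2)$.

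Two small corrections to your commentary:
\begin{enumerate}
\item The difficulty at $p=2$ is not convergence, since the logarithm converges on $2\mathbb Z_2$. It is the loss of valuation preservation when $v_2(z)=1$, caused by the $j=2$ term, exactly as your own inequality shows.
\item Describing the kernel of reduction as the points with $v_p(x)<0$ (together with $O$) works for any $p$-integral Weierstrass model. So minimality is really used only to guarantee that the model has good reduction at every $p\notin S_{E,P}$. That is what makes reduction a homomorphism into the group $\widetilde E(\mathbb F_p)$ in which $r_p$ is defined.
\end{enumerate}
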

	
	For the rest of the paper, we fix such a finite set \(S_{E,P}\). We may and shall assume that \(S_{E,P}\) contains all primes of bad reduction for \(E\), all primes for which the reduction \(\widetilde P\in E(\mathbb F_p)\) is trivial, and any further primes required for the valuation law in Theorem~\ref{thm:EDS-valuation-law}.
	
	For a positive rational number \(a\), let
	\[
	\operatorname{Supp}(a):=\{p:\ v_p(a)\ne 0\}
	\]
	denote its prime support.
	
	We further enlarge \(S_{E,P}\), if necessary, so that
	\[
	\operatorname{Supp}(D_1)\subseteq S_{E,P}.
	\]
	
	For every prime \(p\notin S_{E,P}\), we write
	\[
	r_p:=\operatorname{ord}(\widetilde P)
	\]
	for the order of the reduction of \(P\) in \(E(\mathbb F_p)\).

	\begin{remark}
		The set \(S_{E,P}\) is not canonical. Throughout the paper it is fixed once and for all, and may be enlarged without affecting any of the arguments. In particular, we choose it large enough so that the valuation law of Theorem~\ref{thm:EDS-valuation-law} holds for all primes outside \(S_{E,P}\). All constants below are allowed to depend on this fixed choice of \(S_{E,P}\); since \(S_{E,P}\) has been fixed in terms of \((E,P)\), we write them as depending only on \((E,P)\).
	\end{remark}
	
	\subsection{Power classes modulo a finite set of primes}
	
	Let \(S\) be a finite set of rational primes, and let \(\rho\) be a prime number.
	For a positive integer \(a\), define
	\[
	\operatorname{rad}_{S,\rho}(a)
	:=
	\prod_{\substack{p\notin S\\ \rho\nmid v_p(a)}} p.
	\]
	We also write
	\[
	\operatorname{rad}(N):=\prod_{p\mid N}p
	\]
	for the ordinary radical of a positive integer \(N\).
	Thus \(\operatorname{rad}_{S,\rho}(a)=1\) if and only if all prime valuations of \(a\) outside \(S\) are divisible by \(\rho\).
	Equivalently,
	\[
	a\in(\mathbb Q^\times_{>0})^\rho\cdot
	\langle p:p\in S\rangle.
	\]
	
	In the special case \(\rho=2\), we write
	\[
	\operatorname{sqf}_S(a):=\operatorname{rad}_{S,2}(a).
	\]
	Thus
	\[
	\operatorname{sqf}_S(a)
	=
	\prod_{\substack{p\notin S\\ v_p(a)\equiv 1\pmod 2}}p
	\]
	is the squarefree part of \(a\) outside \(S\).
	
	\section{Prime-index absorption modulo a prime exponent}
	
	The key mechanism underlying the argument is a local congruence obstruction arising at prime indices.  Precisely, a prime-power product condition imposes valuation congruences at primes dividing \(D_\ell\).
	
	The following elementary lemma identifies the order of the reduction at primes dividing \(D_\ell\). This is the local input for the absorption congruence.
	
	\begin{lemma}\label{lem:reduction-order}
		Let \(\ell\) be a prime and let \(p\notin S_{E,P}\). If \(p\mid D_\ell\), then the reduction \(\widetilde P\in E(\mathbb F_p)\) has exact order \(\ell\).
	\end{lemma}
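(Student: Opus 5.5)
The plan is to apply part (1) of Theorem~\ref{thm:EDS-valuation-law} twice, once with \(n=\ell\) and once with \(n=1\), and then use that \(\ell\) is prime. Throughout, \(p\notin S_{E,P}\), so the valuation law is available at \(p\), and \(r_p=\operatorname{ord}(\widetilde P)\) is a well-defined positive integer.

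\textbf{Step 1: \(r_p\) divides \(\ell\).} Since \(p\mid D_\ell\), part (1) of Theorem~\ref{thm:EDS-valuation-law} with \(n=\ell\) gives \(r_p\mid \ell\). Because \(\ell\) is prime, this leaves only the two possibilities \(r_p=1\) and \(r_p=\ell\).

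\textbf{Step 2: \(r_p\neq 1\).} Suppose \(r_p=1\). Then part (1) of Theorem~\ref{thm:EDS-valuation-law} with \(n=1\) gives \(p\mid D_1\), so \(p\in\operatorname{Supp}(D_1)\). But we enlarged \(S_{E,P}\) so that \(\operatorname{Supp}(D_1)\subseteq S_{E,P}\), which contradicts \(p\notin S_{E,P}\). The same contradiction also follows from our convention that \(S_{E,P}\) contains every prime at which \(\widetilde P\) is trivial, since \(r_p=1\) means exactly that \(\widetilde P=O\). Either convention is enough.

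\textbf{Conclusion.} Hence \(r_p=\ell\), so \(\widetilde P\) has exact order \(\ell\) in \(E(\mathbb F_p)\).

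There is no real obstacle here. The only point needing care is that excluding \(r_p=1\) depends on the conventions fixed for \(S_{E,P}\) in Section~2. Without them, a prime dividing \(D_1\) would divide every \(D_n\), and the claim would fail.
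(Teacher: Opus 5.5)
Your proof is correct and follows the paper's argument: part (1) of the valuation law gives \(r_p\mid\ell\), and \(r_p=1\) is ruled out because \(\widetilde P\neq O\) for \(p\notin S_{E,P}\). Your second justification for \(r_p\neq 1\) is exactly the paper's. The alternative, using \(\operatorname{Supp}(D_1)\subseteq S_{E,P}\) together with part (1) at \(n=1\), is an equally valid variant.
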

	
	\begin{proof}
		Since \(p\notin S_{E,P}\), the point \(\widetilde P\) is well-defined and nonzero in \(E(\mathbb F_p)\).
		The condition \(p\mid D_\ell\) is equivalent, by Theorem~\ref{thm:EDS-valuation-law}(1), to
		\[
		r_p\mid \ell,
		\]
		where \(r_p=\operatorname{ord}(\widetilde P)\).
		Because \(\ell\) is prime and \(\widetilde P\ne O\), the order \(r_p\) cannot be \(1\).
		Hence \(r_p=\ell\).
	\end{proof}
	
	\begin{lemma}\label{lem:gcd}
		Let \(\rho\) be a prime number, and let \(\ell_1\) and \(\ell_2\) be distinct rational primes. Then
		\[
		\gcd\bigl(
		\operatorname{rad}_{S_{E,P},\rho}(D_{\ell_1}),
		\operatorname{rad}_{S_{E,P},\rho}(D_{\ell_2})
		\bigr)=1.
		\]
	\end{lemma}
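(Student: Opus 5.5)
We argue by contradiction. Suppose a prime \(q\) divides both \(\operatorname{rad}_{S_{E,P},\rho}(D_{\ell_1})\) and \(\operatorname{rad}_{S_{E,P},\rho}(D_{\ell_2})\). Since \(\operatorname{rad}_{S_{E,P},\rho}\) is, by definition, a product of primes lying outside \(S_{E,P}\) with valuation not divisible by \(\rho\), the prime \(q\) satisfies \(q\notin S_{E,P}\). For \(j=1,2\) it also satisfies
\[
\rho\nmid v_q(D_{\ell_j}),
\]
and in particular \(v_q(D_{\ell_j})\neq 0\), so \(q\mid D_{\ell_j}\).

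Now apply Lemma~\ref{lem:reduction-order} to each pair \((\ell_1,q)\) and \((\ell_2,q)\). Since \(q\notin S_{E,P}\) and \(q\mid D_{\ell_j}\), the reduction \(\widetilde P\in E(\mathbb F_q)\) has exact order \(\ell_1\). By the same lemma it also has exact order \(\ell_2\). The order of a point is a single well-defined integer \(r_q\), so \(\ell_1=r_q=\ell_2\). This contradicts the assumption that \(\ell_1\) and \(\ell_2\) are distinct, and hence the gcd is \(1\).

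There is essentially no obstacle here. The only point requiring care is to justify that a common prime divisor of the two power radicals lies outside \(S_{E,P}\) and genuinely divides both \(D_{\ell_j}\). Both facts are immediate from the definition of \(\operatorname{rad}_{S,\rho}\): nonzero valuation modulo \(\rho\) forces nonzero valuation. Everything else is contained in Lemma~\ref{lem:reduction-order}, which itself rests on Theorem~\ref{thm:EDS-valuation-law}(1) and on the choice of \(S_{E,P}\) excluding primes where \(\widetilde P\) is trivial.

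The argument does not use \(\rho\) beyond this definitional step, and in fact it shows more: the full supports of \(D_{\ell_1}\) and \(D_{\ell_2}\) outside \(S_{E,P}\) are disjoint.
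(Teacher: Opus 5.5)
Your proof is correct and follows the paper's argument exactly: a common prime \(q\) of the two radicals lies outside \(S_{E,P}\) and divides both \(D_{\ell_1}\) and \(D_{\ell_2}\), so applying Lemma~\ref{lem:reduction-order} twice forces \(\ell_1=\ell_2\). Your closing remark is also right: the argument in fact shows that the full supports of \(D_{\ell_1}\) and \(D_{\ell_2}\) outside \(S_{E,P}\) are disjoint.
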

	
	\begin{proof}
		Suppose that a rational prime \(q\) divides both \(\operatorname{rad}_{S_{E,P},\rho}(D_{\ell_1})\) and \(\operatorname{rad}_{S_{E,P},\rho}(D_{\ell_2})\). Then \(q\notin S_{E,P}\), and
		\[
		q\mid D_{\ell_1},
		\qquad
		q\mid D_{\ell_2}.
		\]
		By the preceding lemma, the reduction of \(P\) modulo \(q\) has exact order \(\ell_1\). Applying the same lemma to \(D_{\ell_2}\), the same reduction also has exact order \(\ell_2\). Hence \(\ell_1=\ell_2\), contradicting the assumption that the two primes are distinct. Therefore no such \(q\) exists.
	\end{proof}
	
	For a tuple \(\mathbf n=(n_1,\ldots,n_k)\) of positive integers and a prime \(\ell\), put
	\[
	I_\ell(\mathbf n):=\{i\in\{1,\ldots,k\}:\ell\mid n_i\}.
	\]
	
	\begin{theorem}\label{thm:prime-index-absorption}
		Let \(\rho\) be a prime number.  Let \(n_1,\ldots,n_k\ge 1\), and suppose that
		\[
		\prod_{i=1}^k D_{n_i}\in(\mathbb Q^\times)^\rho.
		\]
		Let \(\ell\) be a rational prime, and let \(p\notin S_{E,P}\) be a prime such that \(p\mid D_\ell\).  Then
		\[
		|I_\ell(\mathbf n)|v_p(D_\ell)
		+
		\sum_{i\in I_\ell(\mathbf n)}v_p(n_i/\ell)
		\equiv 0 \pmod \rho .
		\]
	\end{theorem}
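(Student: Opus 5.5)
The plan is to take the $p$-adic valuation of the product relation and evaluate each term using Lemma~\ref{lem:reduction-order} and Theorem~\ref{thm:EDS-valuation-law}.

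First, $v_p$ of the product. Since $\prod_{i=1}^k D_{n_i}$ is a $\rho$-th power in $\mathbb Q^\times$, write it as $x^\rho$. Then
\[
\sum_{i=1}^k v_p(D_{n_i}) = \rho\, v_p(x) \equiv 0 \pmod \rho .
\]
This is the only place the global hypothesis enters.

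Second, the order of the reduction. Because $p\notin S_{E,P}$ and $p\mid D_\ell$, Lemma~\ref{lem:reduction-order} shows that $\widetilde P$ has exact order $r_p=\ell$ in $E(\mathbb F_p)$.

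Third, evaluate each term by Theorem~\ref{thm:EDS-valuation-law}, splitting the indices $i$ into two cases.
\begin{enumerate}
\item If $i\notin I_\ell(\mathbf n)$, then $\ell\nmid n_i$. Since $r_p=\ell$, part~(1) of the valuation law gives $p\nmid D_{n_i}$, so $v_p(D_{n_i})=0$.
\item If $i\in I_\ell(\mathbf n)$, then $r_p=\ell\mid n_i$. Part~(2) of the valuation law gives
\[
v_p(D_{n_i}) = v_p(D_\ell) + v_p(n_i/\ell).
\]
\end{enumerate}

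Summing over all $i$ and substituting into the congruence from the first step gives
\[
|I_\ell(\mathbf n)|\,v_p(D_\ell) + \sum_{i\in I_\ell(\mathbf n)} v_p(n_i/\ell) \equiv 0 \pmod \rho ,
\]
which is the claim.

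There is no serious obstacle. The only point to check is that the valuation law applies at $p$. This holds because $S_{E,P}$ was chosen so that Theorem~\ref{thm:EDS-valuation-law} is valid at every prime outside it, and $\widetilde P\neq O$ there. That second fact is what makes Lemma~\ref{lem:reduction-order} give exact order $\ell$ rather than order $1$. The $D_n$ are positive integers, so $v_p(D_{n_i})\ge 0$ and the valuations are well-defined.
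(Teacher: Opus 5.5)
Your proposal is correct and follows the paper's proof essentially step for step. Lemma~\ref{lem:reduction-order} gives $r_p=\ell$, and the valuation law then evaluates $v_p(D_{n_i})$ separately for $i\notin I_\ell(\mathbf n)$ and for $i\in I_\ell(\mathbf n)$. Summing these values against the fact that $\rho$ divides $v_p\bigl(\prod_i D_{n_i}\bigr)$ yields the congruence.
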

	
	\begin{proof}
		Since \(p\notin S_{E,P}\) and \(p\mid D_\ell\), the preceding lemma gives
		\[
		r_p=\ell.
		\]
		If \(i\notin I_\ell(\mathbf n)\), then \(\ell\nmid n_i\), hence \(r_p\nmid n_i\).
		By the valuation law,
		\[
		v_p(D_{n_i})=0.
		\]
		If \(i\in I_\ell(\mathbf n)\), then \(\ell\mid n_i\), so \(r_p\mid n_i\).
		Again by the valuation law,
		\[
		v_p(D_{n_i})
		=
		v_p(D_{r_p})+v_p(n_i/r_p)
		=
		v_p(D_\ell)+v_p(n_i/\ell).
		\]
		Since
		\[
		\prod_{i=1}^k D_{n_i}\in(\mathbb Q^\times)^\rho,
		\]
		its \(p\)-adic valuation is divisible by \(\rho\). Therefore
		\[
		0
		\equiv
		v_p\left(\prod_{i=1}^kD_{n_i}\right)
		=
		\sum_{i=1}^k v_p(D_{n_i})
		\pmod\rho.
		\]
		Using the preceding two computations, this becomes
		\[
		0
		\equiv
		\sum_{i\in I_\ell(\mathbf n)}
		\bigl(v_p(D_\ell)+v_p(n_i/\ell)\bigr)
		\pmod\rho.
		\]
		Hence
		\[
		|I_\ell(\mathbf n)|v_p(D_\ell)
		+
		\sum_{i\in I_\ell(\mathbf n)}v_p(n_i/\ell)
		\equiv 0 \pmod\rho.
		\]
	\end{proof}
	
	\subsection{Incidence vectors and multiplicity obstruction}
	
	Let
	\[
	V_\rho:=\mathbb F_\rho^k
	\]
	and let \(\mathbf 1=(1,\ldots,1)\in V_\rho\). For a tuple
	\[
	\mathbf n=(n_1,\ldots,n_k)
	\]
	and a prime \(\ell\), define the incidence vector
	\[
	\mathbf e_\ell(\mathbf n)
	:=
	(\delta_{\ell\mid n_1},\ldots,\delta_{\ell\mid n_k})
	\in V_\rho.
	\]
	Thus
	\[
	\langle \mathbf e_\ell(\mathbf n),\mathbf 1\rangle
	\equiv |I_\ell(\mathbf n)|\pmod{\rho}.
	\]
	For a rational prime \(q\), define the valuation vector
	\[
	\mathbf v_q(\mathbf n)
	:=
	(v_q(n_1),\ldots,v_q(n_k))
	\in V_\rho.
	\]
	
	\begin{proposition}\label{prop:incidence-vector}
		Let \(\rho\) be a prime number. Let \(n_1,\ldots,n_k\geq 1\), and suppose that
		\[
		\prod_{i=1}^kD_{n_i}\in(\mathbb Q^\times)^\rho.
		\]
		Let \(\ell\) be a rational prime, and let \(q\notin S_{E,P}\) be a prime such that \(q\mid D_\ell\).
		Then
		\[
		\langle \mathbf e_\ell(\mathbf n),\mathbf v_q(\mathbf n)\rangle
		\equiv
		|I_\ell(\mathbf n)|\bigl(v_q(\ell)-v_q(D_\ell)\bigr)
		\pmod{\rho}.
		\]
		In particular, if \(q\neq \ell\), then
		\[
		\langle \mathbf e_\ell(\mathbf n),\mathbf v_q(\mathbf n)\rangle
		\equiv
		-|I_\ell(\mathbf n)|v_q(D_\ell)
		\pmod{\rho}.
		\]
	\end{proposition}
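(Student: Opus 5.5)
The plan is to rewrite the absorption congruence of Theorem~\ref{thm:prime-index-absorption} in vector form; essentially no new input is needed. Apply that theorem with \(p=q\). Its hypotheses hold verbatim: \(q\notin S_{E,P}\), \(q\mid D_\ell\), and the product is a \(\rho\)-th power. It gives
\[
|I_\ell(\mathbf n)|v_q(D_\ell)+\sum_{i\in I_\ell(\mathbf n)}v_q(n_i/\ell)\equiv 0\pmod\rho .
\]

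Next, expand the quotient valuations. For \(i\in I_\ell(\mathbf n)\) the quotient \(n_i/\ell\) is an integer, and additivity of \(v_q\) gives \(v_q(n_i/\ell)=v_q(n_i)-v_q(\ell)\). Summing over \(I_\ell(\mathbf n)\) yields
\[
\sum_{i\in I_\ell(\mathbf n)}v_q(n_i/\ell)=\sum_{i\in I_\ell(\mathbf n)}v_q(n_i)-|I_\ell(\mathbf n)|v_q(\ell).
\]
The entries of \(\mathbf e_\ell(\mathbf n)\) are \(1\) exactly on \(I_\ell(\mathbf n)\) and \(0\) elsewhere. Hence the first sum on the right is precisely \(\langle \mathbf e_\ell(\mathbf n),\mathbf v_q(\mathbf n)\rangle\) reduced modulo \(\rho\), since the entries of \(\mathbf v_q(\mathbf n)\) are the integers \(v_q(n_i)\) read in \(\mathbb F_\rho\).

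Substituting into the absorption congruence and rearranging gives
\[
\langle \mathbf e_\ell(\mathbf n),\mathbf v_q(\mathbf n)\rangle\equiv |I_\ell(\mathbf n)|\bigl(v_q(\ell)-v_q(D_\ell)\bigr)\pmod\rho,
\]
which is the main claim. For the special case, if \(q\ne\ell\) then \(v_q(\ell)=0\) because \(\ell\) is prime, and the formula reduces to \(-|I_\ell(\mathbf n)|v_q(D_\ell)\).

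There is no genuine obstacle. The only care needed is bookkeeping: the identity \(v_q(n_i/\ell)=v_q(n_i)-v_q(\ell)\) should be used only for indices in \(I_\ell(\mathbf n)\), where \(n_i/\ell\) is an integer, and one must keep track of the passage from integer identities to congruences in \(\mathbb F_\rho\). The case \(q=\ell\) requires no separate treatment, since it is covered by the general formula with \(v_q(\ell)=1\).
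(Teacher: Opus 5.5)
Your proposal is correct and matches the paper's proof essentially step for step. Like the paper, you apply Theorem~\ref{thm:prime-index-absorption} with \(p=q\), expand \(v_q(n_i/\ell)=v_q(n_i)-v_q(\ell)\) over \(I_\ell(\mathbf n)\), identify the resulting sum with \(\langle \mathbf e_\ell(\mathbf n),\mathbf v_q(\mathbf n)\rangle\) modulo \(\rho\), rearrange, and then specialise to \(v_q(\ell)=0\) when \(q\neq\ell\).
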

	
	\begin{proof}
		By Theorem~\ref{thm:prime-index-absorption}, we have
		\[
		|I_\ell(\mathbf n)|v_q(D_\ell)
		+
		\sum_{i\in I_\ell(\mathbf n)}v_q(n_i/\ell)
		\equiv0\pmod{\rho}.
		\]
		On the other hand,
		\[
		\sum_{i\in I_\ell(\mathbf n)}v_q(n_i/\ell)
		=
		\sum_{i\in I_\ell(\mathbf n)}v_q(n_i)
		-
		|I_\ell(\mathbf n)|v_q(\ell).
		\]
		Modulo \(\rho\), the first term is precisely
		\[
		\langle \mathbf e_\ell(\mathbf n),\mathbf v_q(\mathbf n)\rangle.
		\]
		Therefore
		\[
		|I_\ell(\mathbf n)|v_q(D_\ell)
		+
		\langle \mathbf e_\ell(\mathbf n),\mathbf v_q(\mathbf n)\rangle
		-
		|I_\ell(\mathbf n)|v_q(\ell)
		\equiv0\pmod{\rho}.
		\]
		Rearranging gives
		\[
		\langle \mathbf e_\ell(\mathbf n),\mathbf v_q(\mathbf n)\rangle
		\equiv
		|I_\ell(\mathbf n)|\bigl(v_q(\ell)-v_q(D_\ell)\bigr)
		\pmod{\rho}.
		\]
		If \(q\neq \ell\), then \(v_q(\ell)=0\), giving the final assertion.
	\end{proof}
	
	\begin{corollary}\label{cor:multiplicity-obstruction}
		Assume that
		\[
		\prod_{i=1}^kD_{n_i}\in(\mathbb Q^\times)^\rho.
		\]
		Let \(\ell\) be a rational prime, and let
		\[
		q\mid \operatorname{rad}_{S_{E,P},\rho}(D_\ell)
		\]
		with \(q\neq \ell\). Then
		\[
		v_q\left(\prod_{i\in I_\ell(\mathbf n)} n_i\right)
		\equiv
		-|I_\ell(\mathbf n)|v_q(D_\ell)
		\pmod{\rho}.
		\]
		Equivalently,
		\[
		v_q\left(\prod_{i\in I_\ell(\mathbf n)} \frac{n_i}{\ell}\right)
		\equiv
		-|I_\ell(\mathbf n)|v_q(D_\ell)
		\pmod{\rho}.
		\]
		
		In particular:
		
		\begin{enumerate}
			\item If \(\rho\nmid |I_\ell(\mathbf n)|\), then
			\[
			v_q\left(\prod_{i\in I_\ell(\mathbf n)} \frac{n_i}{\ell}\right)
			\not\equiv0\pmod{\rho}.
			\]
			Thus \(q\) occurs in the quotient indices with nonzero multiplicity modulo \(\rho\).
			
			\item If \(\rho\mid |I_\ell(\mathbf n)|\), then
			\[
			v_q\left(\prod_{i\in I_\ell(\mathbf n)} \frac{n_i}{\ell}\right)
			\equiv0\pmod{\rho}.
			\]
			Thus any occurrence of \(q\) among the quotient indices is \(\rho\)-balanced.
		\end{enumerate}
	\end{corollary}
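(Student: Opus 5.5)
The corollary should follow directly from Proposition~\ref{prop:incidence-vector}, applied with the prime \(q\) in place of the generic prime there. First I check the hypotheses. Since \(q\mid \operatorname{rad}_{S_{E,P},\rho}(D_\ell)\), the definition of \(\operatorname{rad}_{S_{E,P},\rho}\) gives \(q\notin S_{E,P}\) and \(\rho\nmid v_q(D_\ell)\). The latter implies \(v_q(D_\ell)\neq 0\), so \(q\mid D_\ell\). Together with \(q\neq\ell\), the "in particular" clause of Proposition~\ref{prop:incidence-vector} yields
\[
\langle \mathbf e_\ell(\mathbf n),\mathbf v_q(\mathbf n)\rangle\equiv -|I_\ell(\mathbf n)|\,v_q(D_\ell)\pmod\rho .
\]

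Next I rewrite the inner product as a valuation. By definition,
\[
\langle \mathbf e_\ell(\mathbf n),\mathbf v_q(\mathbf n)\rangle\equiv\sum_{i\in I_\ell(\mathbf n)}v_q(n_i)=v_q\Bigl(\prod_{i\in I_\ell(\mathbf n)}n_i\Bigr)\pmod\rho,
\]
which gives the first displayed congruence. For the equivalent form, note that each \(n_i\) with \(i\in I_\ell(\mathbf n)\) is divisible by \(\ell\), so \(n_i/\ell\) is an integer. Since \(q\neq\ell\), we have \(v_q(n_i/\ell)=v_q(n_i)-v_q(\ell)=v_q(n_i)\), so the two products have the same \(q\)-adic valuation.

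For the two special cases I use that \(\mathbb F_\rho\) is a field and \(v_q(D_\ell)\not\equiv0\pmod\rho\).
\begin{enumerate}
\item If \(\rho\nmid|I_\ell(\mathbf n)|\), then \(-|I_\ell(\mathbf n)|\,v_q(D_\ell)\) is a product of nonzero elements of \(\mathbb F_\rho\), hence nonzero. This gives case (1).
\item If \(\rho\mid|I_\ell(\mathbf n)|\), the right-hand side vanishes modulo \(\rho\). This gives case (2).
\end{enumerate}

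There is no real obstacle here. The only points needing care are to extract \(q\mid D_\ell\) and \(q\notin S_{E,P}\) from the radical condition, which is what makes Proposition~\ref{prop:incidence-vector} applicable, and to use \(q\neq\ell\) so that the \(v_q(\ell)\) term drops out.
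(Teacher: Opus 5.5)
Your proposal is correct and follows essentially the same route as the paper: extract $q\notin S_{E,P}$, $q\mid D_\ell$ and $\rho\nmid v_q(D_\ell)$ from the radical condition, then apply Proposition~\ref{prop:incidence-vector} using $q\neq\ell$. You simply make explicit the rewriting of the inner product as a valuation and the field argument for the two special cases, which the paper leaves implicit.
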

	
	\begin{proof}
		Since \(q\mid \operatorname{rad}_{S_{E,P},\rho}(D_\ell)\), we have
		\[
		q\notin S_{E,P},\qquad q\mid D_\ell,\qquad \rho\nmid v_q(D_\ell).
		\]
		The assertion follows from Proposition~\ref{prop:incidence-vector}, since \(q\neq\ell\).
	\end{proof}
	
	The squarefree case gives a particularly transparent form of the incidence obstruction.
	
	\begin{corollary}\label{cor:squarefree-incidence}
		Assume that \(n_1,\ldots,n_k\) are squarefree, and suppose that
		\[
		\prod_{i=1}^kD_{n_i}\in(\mathbb Q^\times)^\rho.
		\]
		For primes \(\ell\) and \(q\), put
		\[
		N_\ell:=\#\{i:\ell\mid n_i\},
		\qquad
		N_{\ell,q}:=\#\{i:\ell q\mid n_i\}.
		\]
		Let \(q\neq \ell\), and assume that
		\[
		q\mid \operatorname{rad}_{S_{E,P},\rho}(D_\ell).
		\]
		Then
		\[
		N_{\ell,q}
		\equiv
		-N_\ell v_q(D_\ell)
		\pmod{\rho}.
		\]
		In particular, if \(\rho\nmid N_\ell\), then
		\[
		N_{\ell,q}\not\equiv0\pmod{\rho}.
		\]
		If \(\rho\mid N_\ell\), then
		\[
		N_{\ell,q}\equiv0\pmod{\rho}.
		\]
	\end{corollary}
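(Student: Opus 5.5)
This is a direct specialisation of Corollary~\ref{cor:multiplicity-obstruction} to squarefree indices. The only work is to rewrite the \(q\)-adic valuation of the quotient product as the count \(N_{\ell,q}\).

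First I will record what the hypothesis gives. Since \(q\mid \operatorname{rad}_{S_{E,P},\rho}(D_\ell)\), we have \(q\notin S_{E,P}\), \(q\mid D_\ell\) and \(\rho\nmid v_q(D_\ell)\). Together with \(q\neq\ell\), these are exactly the hypotheses of Corollary~\ref{cor:multiplicity-obstruction}. Note also that \(I_\ell(\mathbf n)\) is precisely \(\{i:\ell\mid n_i\}\), so \(|I_\ell(\mathbf n)|=N_\ell\). The corollary therefore yields
\[
v_q\Bigl(\prod_{i\in I_\ell(\mathbf n)} n_i\Bigr)\equiv -N_\ell\, v_q(D_\ell)\pmod\rho .
\]

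Next I will evaluate the left-hand side using squarefreeness. Each \(n_i\) is squarefree, so \(v_q(n_i)\in\{0,1\}\), and \(v_q(n_i)=1\) exactly when \(q\mid n_i\). Hence
\[
v_q\Bigl(\prod_{i\in I_\ell(\mathbf n)} n_i\Bigr)=\#\{i:\ell\mid n_i,\ q\mid n_i\}.
\]
Because \(\ell\) and \(q\) are distinct primes, the condition \(\ell\mid n_i\) and \(q\mid n_i\) is equivalent to \(\ell q\mid n_i\). So the count is \(N_{\ell,q}\), and this gives the main congruence \(N_{\ell,q}\equiv -N_\ell v_q(D_\ell)\pmod\rho\).

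For the two particular cases, I will use that \(\rho\) is prime and \(\rho\nmid v_q(D_\ell)\). If \(\rho\nmid N_\ell\), then the product \(N_\ell v_q(D_\ell)\) is nonzero in \(\mathbb F_\rho\), so \(N_{\ell,q}\not\equiv 0\pmod\rho\). If \(\rho\mid N_\ell\), the right-hand side vanishes modulo \(\rho\), so \(N_{\ell,q}\equiv0\pmod\rho\).

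I expect no real obstacle. The only points needing care are the use of \(q\neq\ell\), which is needed both to invoke the corollary and to identify the joint divisibility with \(\ell q\mid n_i\), and the primality of \(\rho\), which makes \(\mathbb F_\rho\) free of zero divisors.
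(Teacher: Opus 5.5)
Your proposal is correct and follows the paper's proof exactly. The paper likewise invokes Corollary~\ref{cor:multiplicity-obstruction} and uses squarefreeness together with \(q\neq\ell\) to identify \(v_q\bigl(\prod_{i\in I_\ell(\mathbf n)} n_i\bigr)\) with \(N_{\ell,q}\). Your explicit derivation of the two particular cases, from the primality of \(\rho\) and \(\rho\nmid v_q(D_\ell)\), just fills in a step the paper leaves implicit.
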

	
	\begin{proof}
		Since the \(n_i\) are squarefree and \(q\neq\ell\), one has
		\[
		v_q\left(\prod_{i\in I_\ell(\mathbf n)} n_i\right)
		=
		\#\{i:\ell q\mid n_i\}
		=
		N_{\ell,q}.
		\]
		The claim follows from Corollary~\ref{cor:multiplicity-obstruction}.
	\end{proof}

	\begin{proposition}\label{prop:prime-index-divisibility}
		Let \(\rho\) be a prime number. Let \(n_1,\ldots,n_k\ge 1\), and suppose that
		\[
		\prod_{i=1}^k D_{n_i}\in(\mathbb Q^\times)^\rho.
		\]
		Let \(\ell\) be a prime such that
		\[
		\rho\nmid |I_\ell(\mathbf n)|.
		\]
		Then
		\[
		\operatorname{rad}_{S_{E,P},\rho}(D_\ell)
		\mid
		\prod_{i\in I_\ell(\mathbf n)}\frac{n_i}{\ell}.
		\]
		Moreover, every prime divisor \(q\) of \(\operatorname{rad}_{S_{E,P},\rho}(D_\ell)\) satisfies
		\[
		\ell\le q+1+2\sqrt q.
		\]
		If, in addition, every \(i\in I_\ell(\mathbf n)\) satisfies \(v_\ell(n_i)=1\) and has \(\ell\) as the largest prime divisor of \(n_i\), then every prime divisor \(q\) of \(\operatorname{rad}_{S_{E,P},\rho}(D_\ell)\) satisfies
		\[
		(\sqrt\ell-1)^2\le q<\ell.
		\]
	\end{proposition}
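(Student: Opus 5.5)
The plan is to apply Theorem~\ref{thm:prime-index-absorption} prime by prime to the divisors of \(\operatorname{rad}_{S_{E,P},\rho}(D_\ell)\), rather than going through Corollary~\ref{cor:multiplicity-obstruction}. That corollary assumes \(q\neq\ell\), but the case \(q=\ell\) (an anomalous-type prime) cannot be excluded a priori, so I prefer the form that covers it.

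\emph{Divisibility.} Let \(q\mid\operatorname{rad}_{S_{E,P},\rho}(D_\ell)\). Then \(q\notin S_{E,P}\), \(q\mid D_\ell\) and \(\rho\nmid v_q(D_\ell)\). Theorem~\ref{thm:prime-index-absorption} with \(p=q\) gives
\[
v_q\Bigl(\prod_{i\in I_\ell(\mathbf n)}\tfrac{n_i}{\ell}\Bigr)\equiv -|I_\ell(\mathbf n)|\,v_q(D_\ell)\pmod\rho .
\]
This holds whether or not \(q=\ell\), because the theorem is phrased directly in terms of \(v_q(n_i/\ell)\). Since \(\rho\) is prime and divides neither \(|I_\ell(\mathbf n)|\) nor \(v_q(D_\ell)\), the right-hand side is nonzero modulo \(\rho\). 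Hence the left-hand side is nonzero, and being a nonnegative integer it is at least \(1\). Therefore \(q\) divides \(\prod_{i\in I_\ell(\mathbf n)} n_i/\ell\). Since \(\operatorname{rad}_{S_{E,P},\rho}(D_\ell)\) is squarefree and each of its prime factors divides this product, the radical divides the product.

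\emph{Hasse bound.} For such \(q\), Lemma~\ref{lem:reduction-order} says \(\widetilde P\) has exact order \(\ell\) in \(E(\mathbb F_q)\), so \(\ell\mid\#E(\mathbb F_q)\). Hence \(\ell\le\#E(\mathbb F_q)\le q+1+2\sqrt q=(\sqrt q+1)^2\), the latter by Hasse's theorem (the reduction is good since \(q\notin S_{E,P}\)).

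\emph{Last part.} Taking square roots in the Hasse bound gives \(\sqrt\ell-1\le\sqrt q\), so \((\sqrt\ell-1)^2\le q\). For the upper bound, the divisibility step shows that \(q\) divides \(n_i/\ell\) for some \(i\in I_\ell(\mathbf n)\). Under the extra hypotheses, \(\ell\) is the largest prime factor of \(n_i\) and \(v_\ell(n_i)=1\). Thus every prime factor of \(n_i/\ell\) is strictly less than \(\ell\), so \(q<\ell\).

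The only delicate point is the one flagged at the start: Corollary~\ref{cor:multiplicity-obstruction} cannot be quoted verbatim when \(q=\ell\), which is why the argument works directly from Theorem~\ref{thm:prime-index-absorption}. Everything else is routine.
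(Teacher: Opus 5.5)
Your proof is correct and takes essentially the same approach as the paper. The paper also applies Theorem~\ref{thm:prime-index-absorption} directly with \(p=q\) rather than going through Corollary~\ref{cor:multiplicity-obstruction}, deduces that \(v_q\bigl(\prod_{i\in I_\ell(\mathbf n)} n_i/\ell\bigr)\) is nonzero mod \(\rho\) and hence positive, and then obtains the two bounds from Lemma~\ref{lem:reduction-order} with the Hasse bound and from the simple-top-prime hypothesis giving \(q<\ell\).
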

	
	\begin{proof}
		Let \(q\) be a prime divisor of
		\(\operatorname{rad}_{S_{E,P},\rho}(D_\ell)\). Then \(q\notin S_{E,P}\) and
		\[
		\rho\nmid v_q(D_\ell).
		\]
		In particular \(q\mid D_\ell\). By Theorem~\ref{thm:prime-index-absorption}, applied with \(p=q\), we have
		\[
		|I_\ell(\mathbf n)|v_q(D_\ell)
		+
		\sum_{i\in I_\ell(\mathbf n)} v_q(n_i/\ell)
		\equiv 0 \pmod{\rho}.
		\]
		Since \(\rho\nmid |I_\ell(\mathbf n)|\) and \(\rho\nmid v_q(D_\ell)\), the product
		\[
		|I_\ell(\mathbf n)|v_q(D_\ell)
		\]
		is nonzero modulo \(\rho\). Therefore
		\[
		\sum_{i\in I_\ell(\mathbf n)}v_q(n_i/\ell)
		\not\equiv 0\pmod\rho.
		\]
		In particular this sum is positive, so
		\[
		q\mid \prod_{i\in I_\ell(\mathbf n)}\frac{n_i}{\ell}.
		\]
		Since this holds for every prime divisor \(q\) of \(\operatorname{rad}_{S_{E,P},\rho}(D_\ell)\), the divisibility follows.
		
		Now let \(q\mid \operatorname{rad}_{S_{E,P},\rho}(D_\ell)\). Then \(q\notin S_{E,P}\) and \(q\mid D_\ell\).
		By the lemma, the reduction of \(P\) modulo \(q\) has exact order \(\ell\).
		Hence
		\[
		\ell\mid \#E(\mathbb F_q).
		\]
		The Hasse bound gives
		\[
		\#E(\mathbb F_q)\le q+1+2\sqrt q,
		\]
		and therefore
		\[
		\ell\le q+1+2\sqrt q.
		\]
		
		Finally assume that every \(i\in I_\ell(\mathbf n)\) satisfies \(v_\ell(n_i)=1\) and has \(\ell\) as the largest prime divisor of \(n_i\).  Since \(q\) is prime and
		\[
		q\mid \prod_{i\in I_\ell(\mathbf n)}\frac{n_i}{\ell},
		\]
		there exists \(i\in I_\ell(\mathbf n)\) such that \(q\mid n_i/\ell\).
		Since \(\ell\) is the largest prime divisor of \(n_i\) and \(v_\ell(n_i)=1\), every prime divisor of \(n_i/\ell\) is strictly smaller than \(\ell\).
		Hence \(q<\ell\).
		Combining this with
		\[
		\ell\le q+1+2\sqrt q=(\sqrt q+1)^2
		\]
		gives
		\[
		q\ge(\sqrt\ell-1)^2.
		\]
		Thus
		\[
		(\sqrt\ell-1)^2\le q<\ell.
		\]
	\end{proof}

	\section{A finiteness input for power classes at prime indices}
	
	We first isolate the precise part of Reynolds' argument which is needed below. In Reynolds' notation, our sequence \(D_m\) is the sequence denoted \(B_m\).
	
	\begin{proposition}[A fixed-\(S\) finiteness consequence]
		\label{prop:reynolds-fixed-S}
		Assume that \(D_1\) is divisible by \(2\) or \(3\).
		Let \(S\) be a fixed finite set of rational primes, and let \(\rho\) be a fixed prime number.
		Then there are only finitely many indices \(m\geq 1\) for which
		\[
		D_m\in (\mathbb Q_{>0}^{\times})^\rho\cdot
		\langle p:p\in S\rangle .
		\]
		Equivalently, after absorbing \(\rho\)-th powers supported on \(S\), there are only finitely many \(m\) for which
		\[
		D_m=u y^\rho,\qquad
		u=\prod_{p\in S}p^{r_p},\quad 0\le r_p<\rho,\quad y\in\mathbb Z_{>0}.
		\]
	\end{proposition}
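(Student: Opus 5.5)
The plan is to reduce the statement to Reynolds' finiteness argument by rewriting the power-class condition as finitely many equations of the form \(D_m=uy^\rho\), and then running Reynolds' method on each of these equations separately. The equivalence of the two formulations is elementary. Since \(D_m\) is a positive integer, membership in \((\mathbb Q_{>0}^\times)^\rho\cdot\langle p:p\in S\rangle\) means that every \(v_p(D_m)\) with \(p\notin S\) is divisible by \(\rho\). Writing \(v_p(D_m)=\rho t_p+r_p\) with \(0\le r_p<\rho\) for \(p\in S\) gives \(D_m=uy^\rho\) with \(u=\prod_{p\in S}p^{r_p}\). There are at most \(\rho^{|S|}\) possible values of \(u\). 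It therefore suffices to show that, for each fixed positive integer \(u\) supported on \(S\), the equation \(D_m=uy^\rho\) has only finitely many solutions \(m\).

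For fixed \(u\), I will follow Reynolds' argument for perfect powers in the sequence \(B_m\).

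First, the hypothesis \(2\mid D_1\) or \(3\mid D_1\) makes \(P\) lie in the formal group at \(p=2\) or \(p=3\). This gives \(v_p(D_m)=v_p(D_1)+v_p(m)\) for all \(m\) at that prime \(p\). This is the ingredient Reynolds uses to control the index.

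Second, I reduce to prime indices. If \(D_m=uy^\rho\), write \(m=\ell m'\) with \(\ell\) a prime factor of \(m\). Strong divisibility, \(D_{m'}\mid D_m\), together with the valuation law shows that \(D_m/D_{m'}\) and \(D_{m'}\) share only primes dividing \(\ell\) or lying in a fixed finite set. Hence the equation propagates to an equation of the same shape, \(D_{\ell}=u'y'^\rho\) (or one attached to the point \([m']P\)), with \(u'\) supported on \(S\cup\{\ell\}\) together with finitely many further fixed primes. Reynolds handles exactly this situation: the extra prime \(\ell\) enters through the \(p\)-adic formula of the first step.

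Third, for a prime index \(\ell\), the relation \(x([\ell]P)=A_\ell/(u'^2y'^{2\rho})\) produces an \(S'\)-integral point on a curve of the form \(v^2=f(w)\) or a twist with \(w\) a \(\rho\)-th power. This curve has genus at least \(2\) for \(\rho\ge3\), so Siegel's theorem or Faltings' theorem applies. For \(\rho=2\), a descent via the \(2\)-isogeny structure is needed, as in Reynolds. Combined with the growth \(\log D_m\asymp \hat h(P)m^2\), this bounds \(m\).

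The main obstacle is the second step. Reynolds proves finiteness for \(u=1\), i.e. genuine perfect powers, and I must check that his argument is uniform under multiplication by a fixed \(S\)-unit. My approach is to enlarge the finite set of bad primes in his argument to include \(S\). At each point where he uses \(\rho\mid v_p(B_m)\), I only need this for \(p\notin S\). At primes in \(S\), the bounded exponents \(r_p<\rho\) contribute only finitely many twists, each of which is a fixed curve to which Siegel or Faltings still applies. If some step genuinely needs \(u=1\), I would instead twist \(E\) so that \(u\) is absorbed into the equation, and apply Reynolds' theorem to each of the finitely many twisted pairs.
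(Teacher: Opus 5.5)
Your reduction to the finitely many coefficients \(u=\prod_{p\in S}p^{r_p}\), \(0\le r_p<\rho\), is correct and is exactly the paper's first step. The paper does not re-derive finiteness for a fixed \(u\), however. It quotes it as the fixed-\(S\) form of Reynolds' theorem, namely his observation \cite[Remark 4.2]{Reynolds2012} that the argument survives adjoining any prescribed finite set of primes to the exceptional set, and then takes the union over \(u\). You instead try to reconstruct that per-\(u\) finiteness, which is the entire content of the statement. The reconstruction does not go through, so there is a genuine gap.

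\textbf{Step 2 fails.} Peeling off a prime is fine as far as it goes. If \(m=\ell m'\), Theorem~\ref{thm:EDS-valuation-law} gives \(v_q(D_m/D_{m'})=v_q(\ell)\) for \(q\notin S_{E,P}\) with \(q\mid D_{m'}\). So \(D_{m'}\) is a \(\rho\)-th power times a number supported on \(S\cup S_{E,P}\cup\{\ell\}\). But \(\ell\) depends on \(m\), and iterating pushes every prime factor of \(m\) into the exceptional set. The coefficient \(u'\), and with it the curve in Step 3, is therefore not confined to a finite set as \(m\) varies. Siegel's and Faltings' theorems only give finiteness on a fixed curve (with fixed \(S'\) in Siegel's case). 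Your variant at \([m']P\) has the same defect, since the base point moves. The asserted propagation to \(D_\ell=u'y'^\rho\) is false. For \(q\notin S_{E,P}\) with \(q\mid D_\ell\) one has \(v_q(D_m/D_\ell)=v_q(m/\ell)\), so \(D_\ell\) and \(D_m/D_\ell\) share every such \(q\) dividing \(m/\ell\). This is exactly the absorption of Theorem~\ref{thm:prime-index-absorption} and Proposition~\ref{prop:prime-index-divisibility}, which the paper later uses as an obstruction, not as a reduction.

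\textbf{Step 3 never specifies its curve.} In short Weierstrass form the natural equation \(C^2=A^3+au^4Ay^{4\rho}+bu^6y^{6\rho}\) defines a curve birational to \(E\) itself. The \(\rho\)-th power condition is visible only through \(\gcd(A,uy)=1\), so reaching genus at least \(2\) requires a genuine descent.

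\textbf{The fallback fails in general.} Rescaling \((x,y)\mapsto(u^2x,u^3y)\) does turn \(D_m=uy^\rho\) into \(D'_m=y^\rho\) on the model with coefficients \(u^ia_i\). But it replaces \(D_1\) by \(D_1/\gcd(D_1,u)\), which is prime to \(6\) as soon as \(u\) absorbs the \(2\)- and \(3\)-parts of \(D_1\) (e.g.\ \(D_1=2\), \(2\in S\), \(u=2\)). Reynolds' hypothesis is then lost.

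The repair is to do what the paper does and invoke the fixed-\(S\) statement for each \(u\). If you want a self-contained argument, note that with \(\rho\) and \(u\) fixed you only need a fixed-exponent finiteness statement. The effort then belongs in that descent, not in the index reduction of Steps 1--2.
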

	
	\begin{proof}
		We spell out the fixed-\(S\), fixed-exponent consequence of Reynolds' argument which is used in the sequel. In Reynolds' notation the
		elliptic divisibility sequence is denoted \((B_m)\), which is our sequence \((D_m)\). Reynolds proves finiteness of perfect-power terms under the hypothesis that \(D_1\) is divisible by \(2\) or \(3\), and observes in \cite[Remark 4.2]{Reynolds2012} that the same argument remains valid after adjoining any prescribed finite set of primes to the auxiliary exceptional set. Equivalently, for each fixed finite set \(S\), each fixed exponent \(\rho\), and each fixed \(S\)-unit coefficient \(u\), the equation
		\[
		D_m=u y^\rho
		\]
		has only finitely many solutions \(m\ge 1\).
		
		Let
		\[
		U_{S,\rho}
		:=
		\left\{
		\prod_{p\in S}p^{r_p}:0\leq r_p<\rho
		\right\}.
		\]
		This is a finite set. The condition
		\[
		D_m\in (\mathbb Q^\times_{>0})^\rho\cdot \langle p:p\in S\rangle
		\]
		is equivalent, after absorbing \(\rho\)-th powers supported on \(S\), to
		\[
		D_m=u y^\rho
		\]
		for some \(u\in U_{S,\rho}\) and \(y\in\mathbb Z_{>0}\). For each fixed \(u\in U_{S,\rho}\), Reynolds' fixed-\(S\) form gives finiteness of the indices \(m\) satisfying this equation. Since \(U_{S,\rho}\) is finite, the union over \(u\in U_{S,\rho}\) is finite. This proves the assertion.
	\end{proof}
	
	\begin{theorem}
		\label{thm:fixed-S-radical}
		Assume that \(D_1\) is divisible by \(2\) or by \(3\). Let \(S\) be a fixed finite set of rational primes, and let \(\rho\) be a fixed prime number.
		Then there are only finitely many indices \(m\geq 1\) for which
		\[
		\operatorname{rad}_{S,\rho}(D_m)=1.
		\]
	\end{theorem}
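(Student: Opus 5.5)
This is a direct reformulation of Proposition~\ref{prop:reynolds-fixed-S}. The plan is to show that the condition \(\operatorname{rad}_{S,\rho}(D_m)=1\) is equivalent to membership of \(D_m\) in \((\mathbb Q^\times_{>0})^\rho\cdot\langle p:p\in S\rangle\). Finiteness is then exactly the conclusion of that proposition.

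First, fix \(m\) with \(\operatorname{rad}_{S,\rho}(D_m)=1\). Since \(D_m\) is a positive integer, this means \(\rho\mid v_p(D_m)\) for every prime \(p\notin S\). Factor
\[
D_m=\prod_{p\in S}p^{v_p(D_m)}\cdot\prod_{p\notin S}p^{v_p(D_m)}.
\]
For \(p\in S\), write \(v_p(D_m)=\rho s_p+r_p\) with \(0\le r_p<\rho\). Then
\[
D_m=u\,y^\rho,\qquad u=\prod_{p\in S}p^{r_p}\in U_{S,\rho},\qquad y=\prod_{p\in S}p^{s_p}\prod_{p\notin S}p^{v_p(D_m)/\rho}\in\mathbb Z_{>0}.
\]
In particular \(D_m\in(\mathbb Q^\times_{>0})^\rho\cdot\langle p:p\in S\rangle\). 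This is the equivalence already noted after the definition of \(\operatorname{rad}_{S,\rho}\) in Section~2.

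Second, apply Proposition~\ref{prop:reynolds-fixed-S}, which is available because \(D_1\) is divisible by \(2\) or \(3\). It says only finitely many \(m\) satisfy this membership. So the set of \(m\) with \(\operatorname{rad}_{S,\rho}(D_m)=1\) is contained in a finite set, which proves the theorem.

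There is no serious obstacle here; all the depth sits in Proposition~\ref{prop:reynolds-fixed-S}. The only point needing care is the bookkeeping. The valuations at primes of \(S\) are arbitrary, and they must be absorbed into the finite coefficient set \(U_{S,\rho}\) together with the \(\rho\)-th power \(y^\rho\). The condition \(\operatorname{rad}_{S,\rho}(D_m)=1\) itself places no constraint at primes of \(S\), so \(S\) need not be related to \(S_{E,P}\). The theorem holds for any fixed finite \(S\), and later it will be applied with \(S=S_{E,P}\).
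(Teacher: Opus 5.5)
Your proposal is correct and matches the paper's proof: the paper likewise notes that \(\operatorname{rad}_{S,\rho}(D_m)=1\) is equivalent to \(D_m\in(\mathbb Q^\times_{>0})^\rho\cdot\langle p:p\in S\rangle\) and then applies Proposition~\ref{prop:reynolds-fixed-S}. Your explicit decomposition \(D_m=u\,y^\rho\) with \(u\in U_{S,\rho}\) simply writes out that equivalence, and you rightly prove only the forward implication, which is all that is needed.
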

	
	\begin{proof}
		By definition,
		\[
		\operatorname{rad}_{S,\rho}(D_m)=1
		\]
		if and only if
		\[
		\rho\mid v_q(D_m)
		\qquad\text{for every prime }q\notin S.
		\]
		Equivalently,
		\[
		D_m\in(\mathbb Q_{>0}^{\times})^\rho
		\cdot \langle p:p\in S\rangle .
		\]
		The assertion follows from the preceding proposition.
	\end{proof}

	\begin{remark}
		The only global finiteness input used in the sequel is the fixed-\(S\) consequence of Reynolds' theorem recorded in Proposition~\ref{prop:reynolds-fixed-S}, or equivalently the radical formulation in Theorem~\ref{thm:fixed-S-radical}. All subsequent arguments require only the following consequence: for each fixed prime \(\rho\), all but finitely many prime indices
		\(\ell\) satisfy
		\[
		\operatorname{rad}_{S_{E,P},\rho}(D_\ell)>1.
		\]
		Thus the use of Reynolds' theorem is confined to producing a \(\rho\)-detecting prime divisor at sufficiently large prime indices. The absorption, incidence, clustering, and large-prime-gap arguments are otherwise local consequences of the valuation law and the Hasse bound.
	\end{remark}
	
	We isolate this consequence as a separate input.
	
	\begin{hypothesis}[\(\rho\)-detecting prime-index input]\label{hyp:detecting}
		Let \(\rho\) be a fixed prime. There exists a constant \(L_\rho=L_\rho(E,P)\) such that, for every prime \(\ell>L_\rho\),
		\[
		\operatorname{rad}_{S_{E,P},\rho}(D_\ell)>1.
		\]
		Equivalently, for every prime \(\ell>L_\rho\), there exists a prime \(p\notin S_{E,P}\) such that
		\[
		p\mid D_\ell,\qquad \rho\nmid v_p(D_\ell).
		\]
	\end{hypothesis}
	
	\begin{proposition}[Reynolds' theorem supplies Hypothesis \ref{hyp:detecting}]\label{prop:large-prime-index-detecting-radical}
		Assume that \(D_1\) is divisible by \(2\) or by \(3\). Then, for every fixed prime \(\rho\), Hypothesis \ref{hyp:detecting} holds.
	\end{proposition}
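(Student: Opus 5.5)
This follows directly from Theorem~\ref{thm:fixed-S-radical} applied with the fixed set \(S=S_{E,P}\). Fix a prime \(\rho\). The hypothesis that \(D_1\) is divisible by \(2\) or \(3\) is exactly the hypothesis of that theorem. It yields a finite set \(\mathcal M_\rho\subset\mathbb Z_{\ge1}\) of indices \(m\) with \(\operatorname{rad}_{S_{E,P},\rho}(D_m)=1\). Since \(S_{E,P}\) was fixed once and for all in terms of \((E,P)\), this finite set depends only on \((E,P)\) and \(\rho\).

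Next I will define the constant. If \(\mathcal M_\rho\) is empty, put \(L_\rho:=1\); otherwise put \(L_\rho:=\max\mathcal M_\rho\). Then any prime \(\ell>L_\rho\) is not in \(\mathcal M_\rho\). Since \(\operatorname{rad}_{S_{E,P},\rho}(D_\ell)\) is a positive integer different from \(1\), we get \(\operatorname{rad}_{S_{E,P},\rho}(D_\ell)>1\). We never need to restrict to prime indices here, because the finiteness statement covers all indices.

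Finally I will check the equivalent formulation in Hypothesis~\ref{hyp:detecting}. The inequality \(\operatorname{rad}_{S_{E,P},\rho}(D_\ell)>1\) means, by the definition of \(\operatorname{rad}_{S,\rho}\), that there is a prime \(p\notin S_{E,P}\) with \(\rho\nmid v_p(D_\ell)\). Then \(v_p(D_\ell)\neq0\), and since \(D_\ell\) is a positive integer, \(p\mid D_\ell\). The converse is immediate.

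There is no real obstacle here. The only point needing care is that all the depth lies in Proposition~\ref{prop:reynolds-fixed-S}, which is used as stated. One should also note explicitly that \(L_\rho\) depends only on \((E,P)\) and \(\rho\) through the fixed choice of \(S_{E,P}\).
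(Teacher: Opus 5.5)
Your proposal is correct and follows the paper's proof: apply Theorem~\ref{thm:fixed-S-radical} with $S=S_{E,P}$, choose $L_\rho$ beyond the finitely many exceptional indices, and unpack the definition of $\operatorname{rad}_{S_{E,P},\rho}$ to obtain the equivalent formulation. The only difference is that you bound all exceptional indices rather than only the prime ones, and you spell out why $\rho\nmid v_p(D_\ell)$ forces $p\mid D_\ell$; both are harmless.
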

	
	\begin{proof}
		Apply Theorem~\ref{thm:fixed-S-radical} with \(S=S_{E,P}\). It gives only finitely many indices \(m\geq 1\) for which
		\[
		\operatorname{rad}_{S_{E,P},\rho}(D_m)=1.
		\]
		In particular, only finitely many prime indices \(\ell\) have this property.
		Choose \(L_\rho\) larger than all such exceptional prime indices.
		
		If \(\ell>L_\rho\) is prime, then
		\[
		\operatorname{rad}_{S_{E,P},\rho}(D_\ell)>1.
		\]
		This is equivalent to the existence of a prime \(p\notin S_{E,P}\) such that
		\[
		p\mid D_\ell,\qquad \rho\nmid v_p(D_\ell).
		\]
		Thus Hypothesis \ref{hyp:detecting} holds.
	\end{proof}
	
	We call a prime \(p\) a primitive prime divisor of \(D_m\) if \(p\mid D_m\) and \(p\nmid D_j\) for every \(1\leq j<m\).
	
	\begin{theorem}
		\label{thm:rho-detecting-primitive-divisor}
		Assume that \(D_1\) is divisible by \(2\) or \(3\). Let \(\rho\) be a fixed prime number.
		Then there exists a constant \(L_\rho=L_\rho(E,P)\) such that, for every prime \(\ell>L_\rho\), there exists a primitive prime divisor \(p\mid D_\ell\) such that
		\[
		p\notin S_{E,P},\qquad \rho\nmid v_p(D_\ell).
		\]
	\end{theorem}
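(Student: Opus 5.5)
The plan is to take \(L_\rho\) as supplied by Proposition~\ref{prop:large-prime-index-detecting-radical}, so that Hypothesis~\ref{hyp:detecting} holds. Then show that any prime produced by that hypothesis is automatically primitive, using Lemma~\ref{lem:reduction-order} together with part (1) of the valuation law, Theorem~\ref{thm:EDS-valuation-law}.

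\textbf{Step 1: choice of the prime.} Since \(D_1\) is divisible by \(2\) or \(3\), Proposition~\ref{prop:large-prime-index-detecting-radical} gives a constant \(L_\rho=L_\rho(E,P)\) such that \(\operatorname{rad}_{S_{E,P},\rho}(D_\ell)>1\) for every prime \(\ell>L_\rho\). Fix such an \(\ell\) and pick any prime \(p\mid \operatorname{rad}_{S_{E,P},\rho}(D_\ell)\). By the definition of \(\operatorname{rad}_{S,\rho}\), this prime satisfies
\[
p\notin S_{E,P},\qquad \rho\nmid v_p(D_\ell).
\]
In particular \(v_p(D_\ell)\neq 0\), so \(p\mid D_\ell\). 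This already gives the two required properties apart from primitivity.

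\textbf{Step 2: primitivity.} Since \(p\notin S_{E,P}\) and \(p\mid D_\ell\), Lemma~\ref{lem:reduction-order} shows that \(r_p=\operatorname{ord}(\widetilde P)=\ell\). Now let \(1\le j<\ell\). Then \(\ell\nmid j\), so \(r_p\nmid j\), and Theorem~\ref{thm:EDS-valuation-law}(1) gives \(p\nmid D_j\). Hence \(p\) is a primitive prime divisor of \(D_\ell\).

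\textbf{Where the difficulty lies.} I do not expect a genuine obstacle. All of the substantive content is contained in the finiteness input, Theorem~\ref{thm:fixed-S-radical}, which is already established. The only point needing care is that primitivity is checked only for primes outside \(S_{E,P}\). This is exactly where the valuation law applies, and it is guaranteed by the definition of \(\operatorname{rad}_{S_{E,P},\rho}\). The convention that \(\operatorname{Supp}(D_1)\subseteq S_{E,P}\) and that \(\widetilde P\neq O\) outside \(S_{E,P}\) is what rules out \(r_p=1\) inside Lemma~\ref{lem:reduction-order}.
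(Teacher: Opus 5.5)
Your proposal is correct and follows the paper's proof essentially verbatim: take $L_\rho$ from Proposition~\ref{prop:large-prime-index-detecting-radical}, pick a prime $p$ dividing $\operatorname{rad}_{S_{E,P},\rho}(D_\ell)$, and deduce primitivity from $r_p=\ell$ via part (1) of the valuation law. The only cosmetic difference is that you cite Lemma~\ref{lem:reduction-order} for $r_p=\ell$, whereas the paper repeats that short argument inline.
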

	
	\begin{proof}
		Let \(L_\rho\) be the constant supplied by Proposition~\ref{prop:large-prime-index-detecting-radical}.
		If \(\ell>L_\rho\) is prime, then the proposition gives a prime \(p\notin S_{E,P}\) such that
		\[
		p\mid D_\ell,\qquad \rho\nmid v_p(D_\ell).
		\]
		It remains to prove that \(p\) is primitive for \(D_\ell\).
		
		Since \(p\notin S_{E,P}\) and \(p\mid D_\ell\), the valuation law gives \(r_p\mid \ell\), where \(r_p\) is the order of the reduction of \(P\)
		modulo \(p\). Because \(\ell\) is prime and the reduction of \(P\) is nonzero outside \(S_{E,P}\), one has \(r_p\ne 1\). Hence
		\[
		r_p=\ell.
		\]
		If \(1\le m<\ell\), then \(r_p\nmid m\), and the valuation law gives
		\[
		v_p(D_m)=0.
		\]
		Thus \(p\nmid D_m\) for every \(1\le m<\ell\). Therefore \(p\) is a primitive prime divisor of \(D_\ell\), and by construction
		\[
		p\notin S_{E,P},\qquad \rho\nmid v_p(D_\ell).
		\]
		This proves the theorem.
	\end{proof}

	\section{Prime-support obstructions and top-prime clustering}
	We now assemble the argument under the detecting-prime input of Hypothesis~\ref{hyp:detecting}.
	The local absorption congruence, combined with this input and the Hasse bound, yields global constraints on the prime support of the indices. The point of this section is threefold.
	First, a single large prime index \(\ell\) forces the \(\rho\)-power-free part of \(D_\ell\) to be absorbed by the quotient indices.
	Second, distinct prime indices give pairwise coprime detecting radicals.
	Third, under the top-prime hypotheses, the corresponding incidence supports among the indices are disjoint, while the product relation forces them to be \(\rho\)-balanced.
	
	The obstruction obtained below is stronger than ordinary support divisibility.
	A prime divisor of \(D_\ell\) is forced to occur in the restricted quotient
	\[
	\prod_{i\in I_\ell(\mathbf n)}\frac{n_i}{\ell},
	\]
	and, when \(\rho\nmid |I_\ell(\mathbf n)|\), it occurs there with nonzero total multiplicity modulo \(\rho\).
	For distinct large prime indices, the corresponding detecting supports are disjoint.
	Thus the obstruction accumulates multiplicatively across the large prime divisors of the indices.

	For an integer \(N\geq 2\), let
	\[
	P^+(N):=\max\{p:p\mid N\}
	\]
	denote the largest prime divisor of \(N\). We also put \(P^+(1)=1\).
	
	For a real number \(B\ge 2\), a positive integer \(N\) is called \(B\)-smooth if \(P^+(N)\le B\).
	
	The next result is the effective form of Proposition~\ref{prop:prime-index-divisibility} at all sufficiently large prime indices, where the nontriviality of the detecting radical is supplied by Hypothesis~\ref{hyp:detecting}.	
	
	\begin{proposition}\label{prop:single-prime-support}
		Assume Hypothesis~\ref{hyp:detecting} for the prime \(\rho\), and let \(L_\rho\) be the constant appearing there. Let \(n_1,\ldots,n_k\ge 1\) satisfy
		\[
		\prod_{i=1}^k D_{n_i}\in(\mathbb Q^\times)^\rho.
		\]
		Let \(\ell>L_\rho\) be a prime such that
		\[
		\rho\nmid |I_\ell(\mathbf n)|.
		\]
		Then
		\[
		\operatorname{rad}_{S_{E,P},\rho}(D_\ell)>1
		\]
		and
		\[
		\operatorname{rad}_{S_{E,P},\rho}(D_\ell)
		\mid
		\prod_{i\in I_\ell(\mathbf n)}\frac{n_i}{\ell}.
		\]
		Moreover, every prime divisor \(q\) of \(\operatorname{rad}_{S_{E,P},\rho}(D_\ell)\) satisfies
		\[
		\ell\le q+1+2\sqrt q.
		\]
		If, in addition, every \(i\in I_\ell(\mathbf n)\) satisfies \(v_\ell(n_i)=1\) and has \(\ell\) as the largest prime divisor of \(n_i\), then every prime divisor \(q\) of \(\operatorname{rad}_{S_{E,P},\rho}(D_\ell)\) satisfies
		\[
		(\sqrt\ell-1)^2\le q<\ell.
		\]
	\end{proposition}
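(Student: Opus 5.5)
The proof is a direct combination of Hypothesis~\ref{hyp:detecting} with Proposition~\ref{prop:prime-index-divisibility}. Since \(\ell>L_\rho\) is prime, Hypothesis~\ref{hyp:detecting} gives
\[
\operatorname{rad}_{S_{E,P},\rho}(D_\ell)>1,
\]
which is the first assertion. Equivalently, there is a prime \(q\notin S_{E,P}\) with \(q\mid D_\ell\) and \(\rho\nmid v_q(D_\ell)\).

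For the remaining assertions, note that the hypotheses of Proposition~\ref{prop:prime-index-divisibility} hold verbatim. The product \(\prod_i D_{n_i}\) is a \(\rho\)-th power and \(\rho\nmid |I_\ell(\mathbf n)|\). That proposition therefore yields three conclusions:
\[
\operatorname{rad}_{S_{E,P},\rho}(D_\ell)\mid \prod_{i\in I_\ell(\mathbf n)} \frac{n_i}{\ell},
\]
the Hasse-scale bound \(\ell\le q+1+2\sqrt q\) for every prime divisor \(q\) of the detecting radical, and, under the simple top-prime hypothesis, the window \((\sqrt\ell-1)^2\le q<\ell\).

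The proposition's own argument explains why these hold. Each such \(q\) has reduction order exactly \(\ell\) by Lemma~\ref{lem:reduction-order}, so \(\ell\mid\#E(\mathbb F_q)\). The absorption congruence of Theorem~\ref{thm:prime-index-absorption} forces \(q\) into the quotient indices with nonzero total multiplicity modulo \(\rho\). Finally, \(q<\ell\) follows because \(q\) divides some \(n_i/\ell\), whose prime factors all lie strictly below \(\ell\).

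There is no genuine obstacle here. The only content beyond Proposition~\ref{prop:prime-index-divisibility} is nontriviality. Without Hypothesis~\ref{hyp:detecting}, the divisibility and the bounds would hold vacuously when the radical equals \(1\). Invoking the hypothesis for primes \(\ell>L_\rho\) guarantees at least one prime \(q\) to which the Hasse and window bounds genuinely apply, so the statement has content. I would write the proof as these two invocations in sequence and note this point explicitly.
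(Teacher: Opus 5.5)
Your proposal is correct and matches the paper's proof: Hypothesis~\ref{hyp:detecting} gives \(\operatorname{rad}_{S_{E,P},\rho}(D_\ell)>1\) for \(\ell>L_\rho\), and every remaining assertion is Proposition~\ref{prop:prime-index-divisibility} applied to the same tuple. Your point that the hypothesis is what stops the divisibility and the bounds from being vacuous is the right reading of what this proposition adds.
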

	
	\begin{proof}
		Since \(\ell>L_\rho\), Hypothesis~\ref{hyp:detecting} gives
		\[
		\operatorname{rad}_{S_{E,P},\rho}(D_\ell)>1.
		\]
		The remaining assertions are precisely Proposition~\ref{prop:prime-index-divisibility} applied to the present tuple
		\((n_1,\ldots,n_k)\).
	\end{proof}
	
	\begin{proposition}\label{thm:simultaneous-prime-support}
		Assume Hypothesis~\ref{hyp:detecting} for the prime \(\rho\), and let \(L_\rho=L_\rho(E,P)\) be as in Proposition~\ref{prop:single-prime-support}. Let
		\(n_1,\ldots,n_k\ge 1\) satisfy
		\[
		\prod_{i=1}^k D_{n_i}\in(\mathbb Q^\times)^\rho.
		\]
		Let \(\Lambda\) be a finite set of primes \(\ell>L_\rho\) such that
		\[
		\rho\nmid |I_\ell(\mathbf n)|
		\]
		for every \(\ell\in\Lambda\). Then the integers
		\[
		\operatorname{rad}_{S_{E,P},\rho}(D_\ell),\qquad \ell\in\Lambda,
		\]
		are pairwise coprime, and
		\[
		\prod_{\ell\in\Lambda}
		\operatorname{rad}_{S_{E,P},\rho}(D_\ell)
		\mid
		\prod_{\ell\in\Lambda}
		\prod_{i\in I_\ell(\mathbf n)}
		\frac{n_i}{\ell}.
		\]
		Moreover, if \(q\) is a prime divisor of \(\operatorname{rad}_{S_{E,P},\rho}(D_\ell)\) for some \(\ell\in\Lambda\), then
		\[
		\ell\le q+1+2\sqrt q.
		\]
		If, in addition, for every \(\ell\in\Lambda\) and every \(i\in I_\ell(\mathbf n)\), one has
		\[
		v_\ell(n_i)=1,\qquad \ell=P^+(n_i),
		\]
		then every such prime divisor \(q\) satisfies
		\[
		(\sqrt\ell-1)^2\le q<\ell.
		\]
	\end{proposition}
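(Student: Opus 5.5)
The plan is to assemble Proposition~\ref{thm:simultaneous-prime-support} from results already in hand. The pairwise coprimality comes from Lemma~\ref{lem:gcd}. The individual divisibilities come from Proposition~\ref{prop:single-prime-support}. Combining these, with one small point to check, gives the product divisibility.

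First, for distinct \(\ell_1,\ell_2\in\Lambda\), Lemma~\ref{lem:gcd} applies directly, since it needs only that \(\ell_1\ne\ell_2\) are primes. It gives
\[
\gcd\bigl(\operatorname{rad}_{S_{E,P},\rho}(D_{\ell_1}),\operatorname{rad}_{S_{E,P},\rho}(D_{\ell_2})\bigr)=1.
\]
Second, each \(\ell\in\Lambda\) satisfies \(\ell>L_\rho\) and \(\rho\nmid|I_\ell(\mathbf n)|\). So Proposition~\ref{prop:single-prime-support} yields
\[
\operatorname{rad}_{S_{E,P},\rho}(D_\ell)\mid Q_\ell:=\prod_{i\in I_\ell(\mathbf n)}\frac{n_i}{\ell}.
\]
The same proposition also yields the Hasse inequality \(\ell\le q+1+2\sqrt q\) for every prime \(q\mid\operatorname{rad}_{S_{E,P},\rho}(D_\ell)\). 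Under the extra top-prime hypotheses it gives the window \((\sqrt\ell-1)^2\le q<\ell\). These are exactly the last two assertions, applied one \(\ell\) at a time.

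The only step needing care is the product divisibility
\[
\prod_{\ell\in\Lambda}\operatorname{rad}_{S_{E,P},\rho}(D_\ell)\ \Big|\ \prod_{\ell\in\Lambda}Q_\ell .
\]
I would argue prime by prime. The left side is a product of pairwise coprime squarefree integers, so it is squarefree. Each of its prime factors \(q\) divides \(\operatorname{rad}_{S_{E,P},\rho}(D_\ell)\) for exactly one \(\ell\in\Lambda\). Hence \(v_q(\mathrm{LHS})=1\). On the other side, \(q\mid Q_\ell\), so \(v_q(\mathrm{RHS})\ge v_q(Q_\ell)\ge1\). Every valuation on the left is therefore at most the corresponding one on the right, and the divisibility follows.

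Equivalently, a product of pairwise coprime divisors of integers \(Q_\ell\) divides \(\prod Q_\ell\). Pairwise coprimality is in fact not strictly needed here, since \(a_\ell\mid Q_\ell\) for all \(\ell\) already implies \(\prod a_\ell\mid\prod Q_\ell\). I would note this, but still record coprimality as the structural content of the statement. I expect no real obstacle: the argument is bookkeeping on top of Lemma~\ref{lem:gcd} and Proposition~\ref{prop:single-prime-support}.
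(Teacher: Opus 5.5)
Your proposal is correct and matches the paper's proof. Like the paper, you get pairwise coprimality from Lemma~\ref{lem:gcd}, and the individual divisibilities, Hasse bounds and top-prime windows from Proposition~\ref{prop:single-prime-support}. You then obtain the product divisibility by multiplying the individual divisibilities, and your remark that coprimality is not needed for this step agrees with the paper, which simply multiplies.
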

	
	\begin{proof}
		For each \(\ell\in\Lambda\), Proposition~\ref{prop:single-prime-support} gives
		\[
		\operatorname{rad}_{S_{E,P},\rho}(D_\ell)>1
		\]
		and
		\[
		\operatorname{rad}_{S_{E,P},\rho}(D_\ell)
		\mid
		\prod_{i\in I_\ell(\mathbf n)}\frac{n_i}{\ell}.
		\]
		The pairwise coprimality of the integers \(\operatorname{rad}_{S_{E,P},\rho}(D_\ell)\), with \(\ell\in\Lambda\), follows from the disjointness lemma in Section~3. Multiplying the displayed divisibilities over all \(\ell\in\Lambda\) gives
		\[
		\prod_{\ell\in\Lambda}
		\operatorname{rad}_{S_{E,P},\rho}(D_\ell)
		\mid
		\prod_{\ell\in\Lambda}
		\prod_{i\in I_\ell(\mathbf n)}\frac{n_i}{\ell}.
		\]
		The Hasse-scale bound
		\[
		\ell\leq q+1+2\sqrt q
		\]
		for each prime divisor \(q\) of \(\operatorname{rad}_{S_{E,P},\rho}(D_\ell)\) is also part of Proposition~\ref{prop:single-prime-support}.
		The final assertion follows from the final assertion of Proposition~\ref{prop:single-prime-support} applied to each \(\ell\in\Lambda\).
	\end{proof}
	
	\begin{corollary}\label{cor:radical-lower-bound}
		Assume the hypotheses of Proposition~\ref{thm:simultaneous-prime-support}. Suppose moreover that, for every \(\ell\in\Lambda\) and every \(i\in I_\ell(\mathbf n)\), one has
		\[
		v_\ell(n_i)=1,
		\qquad
		\ell=P^+(n_i).
		\]
		Then
		\[
		\operatorname{rad}\left(
		\prod_{\ell\in\Lambda}
		\prod_{i\in I_\ell(\mathbf n)}\frac{n_i}{\ell}
		\right)
		\geq
		\prod_{\ell\in\Lambda}(\sqrt{\ell}-1)^2 .
		\]
	\end{corollary}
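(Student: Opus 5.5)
The argument combines the pairwise coprimality and divisibility from Proposition~\ref{thm:simultaneous-prime-support} with the Hasse-scale lower bound on each prime divisor. Write
\[
Q:=\prod_{\ell\in\Lambda}\prod_{i\in I_\ell(\mathbf n)}\frac{n_i}{\ell},
\qquad R_\ell:=\operatorname{rad}_{S_{E,P},\rho}(D_\ell).
\]

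Under the hypotheses of Proposition~\ref{thm:simultaneous-prime-support}, the integers \(R_\ell\) (\(\ell\in\Lambda\)) are pairwise coprime and \(\prod_{\ell}R_\ell\mid Q\). Each \(R_\ell\) is squarefree by definition, being a product of distinct primes. Pairwise coprimality then makes \(\prod_\ell R_\ell\) squarefree as well. A squarefree divisor of \(Q\) divides \(\operatorname{rad}(Q)\), so
\[
\operatorname{rad}(Q)\ \ge\ \prod_{\ell\in\Lambda}R_\ell .
\]

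It remains to bound each \(R_\ell\) from below. Since \(\ell>L_\rho\), Proposition~\ref{prop:single-prime-support} (via Hypothesis~\ref{hyp:detecting}) gives \(R_\ell>1\), so \(R_\ell\) has at least one prime divisor \(q\). Under the top-prime assumptions \(v_\ell(n_i)=1\) and \(\ell=P^+(n_i)\), the final assertion of Proposition~\ref{thm:simultaneous-prime-support} gives \(q\ge(\sqrt\ell-1)^2\). Hence \(R_\ell\ge q\ge(\sqrt\ell-1)^2\), and multiplying over \(\ell\in\Lambda\) yields the claimed bound.

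The only step needing care is converting the divisibility of a product into a bound on \(\operatorname{rad}(Q)\). This works because the product \(\prod_\ell R_\ell\) is itself squarefree, which relies on Lemma~\ref{lem:gcd}; without coprimality, primes shared between different \(R_\ell\) could be counted twice. Everything else is a direct invocation of results already stated. The bound is rather weak, since it uses only one prime divisor of each \(R_\ell\), but that suffices.
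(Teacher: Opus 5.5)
Your proof is correct and is essentially the paper's argument. The paper picks one prime $q_\ell\mid\operatorname{rad}_{S_{E,P},\rho}(D_\ell)$ for each $\ell\in\Lambda$, uses Lemma~\ref{lem:gcd} to see these are pairwise distinct divisors of the product, and bounds each by $(\sqrt\ell-1)^2$; you package the same coprimality as the squarefree product $\prod_\ell R_\ell$ dividing $\operatorname{rad}(Q)$, which gives the marginally stronger intermediate bound $\operatorname{rad}(Q)\ge\prod_\ell R_\ell$.
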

	
	\begin{proof}
		For each \(\ell\in\Lambda\), Proposition~\ref{prop:single-prime-support} gives
		\[
		\operatorname{rad}_{S_{E,P},\rho}(D_\ell)>1.
		\]
		Choose a prime divisor
		\[
		q_\ell\mid \operatorname{rad}_{S_{E,P},\rho}(D_\ell).
		\]
		By the disjointness lemma in Section~3, the primes \(q_\ell\), as \(\ell\) varies in \(\Lambda\), are pairwise distinct. Proposition~\ref{prop:single-prime-support} shows that each \(q_\ell\) divides
		\[
		\prod_{\ell'\in\Lambda}
		\prod_{i\in I_{\ell'}(\mathbf n)}\frac{n_i}{\ell'}
		\]
		and, under the top-prime hypothesis,
		\[
		q_\ell\geq(\sqrt{\ell}-1)^2 .
		\]
		Therefore the radical of the displayed product is at least
		\[
		\prod_{\ell\in\Lambda} q_\ell
		\geq
		\prod_{\ell\in\Lambda}(\sqrt{\ell}-1)^2 .
		\]
	\end{proof}
	
	This lower bound is the quantitative form of the fact that distinct large prime indices require distinct absorbing primes in the quotient indices.

	Before deriving the smooth-cofactor obstruction, we record the relevant multiplicity consequence.
	Under the hypotheses of Proposition~\ref{prop:single-prime-support} and the top-prime condition \(v_\ell(n_i)=1\), \(\ell=P^+(n_i)\) for \(i\in I_\ell(\mathbf n)\), every prime
	\[
	q\mid \operatorname{rad}_{S_{E,P},\rho}(D_\ell)
	\]
	lies in the interval
	\[
	(\sqrt{\ell}-1)^2\le q<\ell
	\]
	and occurs in
	\[
	\prod_{i\in I_\ell(\mathbf n)}\frac{n_i}{\ell}
	\]
	with nonzero total \(q\)-adic multiplicity modulo \(\rho\).
	This is the combination of Proposition~\ref{prop:single-prime-support} with the multiplicity congruence of Corollary~\ref{cor:multiplicity-obstruction}.
	
	\begin{proposition}\label{thm:smooth-cofactor-obstruction}
		Assume Hypothesis \ref{hyp:detecting} for the prime \(\rho\), and let \(L_\rho\) be the constant appearing there. Let \(B\geq 2\), and suppose that
		\[
		\prod_{i=1}^k D_{n_i}\in(\mathbb Q^\times)^\rho.
		\]
		Let \(\ell\) be a prime satisfying
		\[
		\ell>\max\{L_\rho,(\sqrt B+1)^2\}.
		\]
		Assume that, for every \(i\in I_\ell(\mathbf n)\), one has
		\[
		v_\ell(n_i)=1,
		\qquad
		\ell=P^+(n_i),
		\]
		and that \(n_i/\ell\) is \(B\)-smooth. Then
		\[
		\rho\mid |I_\ell(\mathbf n)|.
		\]
	\end{proposition}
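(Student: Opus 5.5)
The plan is to argue by contradiction, playing the Hasse-scale lower bound from Proposition~\ref{prop:single-prime-support} against the smoothness bound on the quotient indices.

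Suppose that \(\rho\nmid |I_\ell(\mathbf n)|\). In particular \(I_\ell(\mathbf n)\neq\emptyset\), since \(\rho\mid 0\). Because \(\ell>L_\rho\), Proposition~\ref{prop:single-prime-support} applies to this \(\ell\) and gives three facts:
\[
\operatorname{rad}_{S_{E,P},\rho}(D_\ell)>1
\qquad\text{and}\qquad
\operatorname{rad}_{S_{E,P},\rho}(D_\ell)\mid \prod_{i\in I_\ell(\mathbf n)}\frac{n_i}{\ell}.
\]
Moreover, the top-prime hypotheses \(v_\ell(n_i)=1\) and \(\ell=P^+(n_i)\) hold for every \(i\in I_\ell(\mathbf n)\). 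So every prime divisor \(q\) of \(\operatorname{rad}_{S_{E,P},\rho}(D_\ell)\) satisfies \((\sqrt\ell-1)^2\le q<\ell\).

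Fix such a prime \(q\); one exists because the radical exceeds \(1\). By the divisibility above, \(q\) divides \(n_i/\ell\) for some \(i\in I_\ell(\mathbf n)\). Since \(n_i/\ell\) is \(B\)-smooth, this forces \(q\le B\).

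On the other hand, \(\ell>(\sqrt B+1)^2\) gives \(\sqrt\ell>\sqrt B+1\), so \(\sqrt\ell-1>\sqrt B\ge\sqrt2>0\). Squaring yields \((\sqrt\ell-1)^2>B\), and hence \(q\ge(\sqrt\ell-1)^2>B\). This contradicts \(q\le B\), so \(\rho\mid|I_\ell(\mathbf n)|\).

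I expect no step to be a real obstacle, since all the arithmetic content is already in Proposition~\ref{prop:single-prime-support}. The only points needing care are the ones checked above: the positivity of \(\sqrt\ell-1\) before squaring, and the empty-index case, which is excluded automatically.
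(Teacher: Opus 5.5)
Your proof is correct and follows the paper's argument essentially step for step: assume \(\rho\nmid |I_\ell(\mathbf n)|\), invoke Proposition~\ref{prop:single-prime-support} to obtain a prime \(q\mid\operatorname{rad}_{S_{E,P},\rho}(D_\ell)\) with \((\sqrt\ell-1)^2\le q\), use \(B\)-smoothness of the quotient indices to get \(q\le B\), and contradict \(\ell>(\sqrt B+1)^2\). Your remarks on the positivity of \(\sqrt\ell-1\) and the empty case \(I_\ell(\mathbf n)=\varnothing\) make explicit two points the paper passes over silently.
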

	
	\begin{proof}
		Suppose, for contradiction, that
		\[
		\rho\nmid |I_\ell(\mathbf n)|.
		\]
		Since \(\ell>L_\rho\), Proposition~\ref{prop:single-prime-support} applies and gives
		\[
		\operatorname{rad}_{S_{E,P},\rho}(D_\ell)>1
		\]
		and
		\[
		\operatorname{rad}_{S_{E,P},\rho}(D_\ell)
		\mid
		\prod_{i\in I_\ell(\mathbf n)}\frac{n_i}{\ell}.
		\]
		Choose a prime divisor
		\[
		q\mid \operatorname{rad}_{S_{E,P},\rho}(D_\ell).
		\]
		By the top-prime hypothesis and Proposition~\ref{prop:single-prime-support}, one has
		\[
		(\sqrt{\ell}-1)^2\leq q<\ell.
		\]
		On the other hand, each \(n_i/\ell\), with \(i\in I_\ell(\mathbf n)\), is \(B\)-smooth. Hence the product
		\[
		\prod_{i\in I_\ell(\mathbf n)}\frac{n_i}{\ell}
		\]
		is also \(B\)-smooth. Since \(q\) divides this product, we have \(q\leq B\).
		Therefore
		\[
		(\sqrt{\ell}-1)^2\leq B,
		\]
		which is equivalent to
		\[
		\ell\leq(\sqrt B+1)^2.
		\]
		This contradicts the hypothesis
		\[
		\ell>(\sqrt B+1)^2.
		\]
		Hence \(\rho\mid |I_\ell(\mathbf n)|\).
	\end{proof}
	
	We now pass from the one-prime balancing obstruction to a genuinely \(k\)-term consequence. Under the top-prime hypotheses, the incidence
	supports attached to distinct large top primes are automatically disjoint. The arithmetic content is that the product relation forces each nonempty such support to be \(\rho\)-balanced. It is useful to regard these large top prime divisors as defining an incidence matrix over \(\mathbb F_\rho\).

	\begin{theorem}[Large top-prime cluster-packing obstruction]\label{thm:large-top-prime-cluster-packing}
		Assume Hypothesis~\ref{hyp:detecting} for the prime \(\rho\), and let \(L_\rho\) be the constant appearing there.
		Let \(B\geq 2\), and put
		\[
		C_{\rho,B}:=\max\{L_\rho,(\sqrt B+1)^2\}.
		\]
		Let \(n_1,\ldots,n_k\geq 1\) satisfy
		\[
		\prod_{i=1}^k D_{n_i}\in (\mathbb Q^\times)^\rho .
		\]
		Let \(\Lambda\) be a finite set of primes \(\ell>C_{\rho,B}\) such that, for every \(\ell\in\Lambda\) and every \(i\in I_\ell(\mathbf n)\), one has
		\[
		v_\ell(n_i)=1,\qquad \ell=P^+(n_i),
		\]
		and \(n_i/\ell\) is \(B\)-smooth.
		
		Define
		\[
		M_\Lambda(\mathbf n):=(\delta_{\ell\mid n_i})_{\ell\in\Lambda,\ 1\leq i\leq k}
		\in \operatorname{Mat}_{|\Lambda|\times k}(\mathbb F_\rho),
		\]
		and put
		\[
		\Lambda^\ast:=\{\ell\in\Lambda:I_\ell(\mathbf n)\neq\varnothing\}.
		\]
		Then the following hold.
		
		\begin{enumerate}
			\item For every \(\ell\in\Lambda\),
			\[
			\rho\mid |I_\ell(\mathbf n)|.
			\]
			Equivalently,
			\[
			M_\Lambda(\mathbf n)
			\begin{pmatrix}
				1\\
				\vdots\\
				1
			\end{pmatrix}
			=0
			\quad\text{in }\mathbb F_\rho^{|\Lambda|}.
			\]
			
			\item If \(\ell,\ell'\in\Lambda^\ast\) and \(\ell\neq \ell'\), then
			\[
			I_\ell(\mathbf n)\cap I_{\ell'}(\mathbf n)=\varnothing .
			\]
			Thus the nonzero rows of \(M_\Lambda(\mathbf n)\) have pairwise disjoint supports.
			
			\item The nonzero rows of \(M_\Lambda(\mathbf n)\) are linearly independent over \(\mathbb F_\rho\). Hence
			\[
			\operatorname{rank}_{\mathbb F_\rho} M_\Lambda(\mathbf n)=|\Lambda^\ast|.
			\]
			
			\item One has the packing bound
			\[
			|\Lambda^\ast|\leq \left\lfloor \frac{k}{\rho}\right\rfloor .
			\]
			In particular, if
			\[
			|\Lambda^\ast|>\left\lfloor \frac{k}{\rho}\right\rfloor,
			\]
			then no relation
			\[
			\prod_{i=1}^k D_{n_i}\in(\mathbb Q^\times)^\rho
			\]
			can hold.
			
			\item If \(k<\rho\), then \(\Lambda^\ast=\varnothing\). Equivalently, no prime \(\ell\in\Lambda\) divides any of the indices \(n_i\).
		\end{enumerate}
	\end{theorem}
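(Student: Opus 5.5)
Part (1) follows by applying Proposition~\ref{thm:smooth-cofactor-obstruction} to each \(\ell\in\Lambda\) separately. Its hypotheses hold because \(\ell>C_{\rho,B}=\max\{L_\rho,(\sqrt B+1)^2\}\), and because for every \(i\in I_\ell(\mathbf n)\) we are given \(v_\ell(n_i)=1\), \(\ell=P^+(n_i)\), and \(n_i/\ell\) \(B\)-smooth. So \(\rho\mid |I_\ell(\mathbf n)|\). Since the \(\ell\)-th entry of \(M_\Lambda(\mathbf n)\mathbf 1\) is \(|I_\ell(\mathbf n)| \bmod \rho\), this is the matrix reformulation. When \(I_\ell(\mathbf n)=\varnothing\) the statement is trivially true, and Proposition~\ref{thm:smooth-cofactor-obstruction} covers that case as well, since \(\rho\mid 0\).

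Part (2) is purely combinatorial and uses only the top-prime hypothesis. Suppose \(i\in I_\ell(\mathbf n)\cap I_{\ell'}(\mathbf n)\) with \(\ell\ne\ell'\). Then \(\ell=P^+(n_i)=\ell'\), which is a contradiction. Hence the supports of the nonzero rows (those indexed by \(\Lambda^\ast\)) are pairwise disjoint.

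For part (3), suppose \(\sum_{\ell\in\Lambda^\ast}c_\ell\mathbf e_\ell(\mathbf n)=0\) over \(\mathbb F_\rho\). For each \(\ell\in\Lambda^\ast\), pick \(i\in I_\ell(\mathbf n)\). By disjointness, the \(i\)-th coordinate of the sum is \(c_\ell\), so \(c_\ell=0\). The rows indexed by \(\Lambda\setminus\Lambda^\ast\) are zero, so the rank equals \(|\Lambda^\ast|\). One point needs care here: distinct nonzero rows with disjoint supports are also distinct as vectors, so no row is counted twice.

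For part (4), the sets \(I_\ell(\mathbf n)\) with \(\ell\in\Lambda^\ast\) are nonempty, pairwise disjoint subsets of \(\{1,\dots,k\}\). By (1) each has cardinality a positive multiple of \(\rho\), hence at least \(\rho\). Therefore \(\rho|\Lambda^\ast|\le\sum_{\ell\in\Lambda^\ast}|I_\ell(\mathbf n)|\le k\), which gives \(|\Lambda^\ast|\le\lfloor k/\rho\rfloor\). The "in particular" clause is the contrapositive. Part (5) is immediate: if \(k<\rho\), then \(\lfloor k/\rho\rfloor=0\), so \(\Lambda^\ast=\varnothing\).

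None of these steps is a real obstacle, because all the arithmetic is packaged in Proposition~\ref{thm:smooth-cofactor-obstruction}. The only delicate point is checking that its hypotheses are exactly those imposed on each \(\ell\in\Lambda\), and in particular that the threshold \(C_{\rho,B}\) dominates both \(L_\rho\) and \((\sqrt B+1)^2\).
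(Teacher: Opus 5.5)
Your proposal is correct and follows the paper's proof essentially step for step. You apply Proposition~\ref{thm:smooth-cofactor-obstruction} to each $\ell\in\Lambda$ for part (1), get disjointness from $\ell=P^+(n_i)=\ell'$, get independence by reading off a coordinate in each support, and get the packing bound from $\rho|\Lambda^\ast|\le\sum_{\ell\in\Lambda^\ast}|I_\ell(\mathbf n)|\le k$.
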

	
	\begin{proof}
		Let \(\ell\in\Lambda\). By hypothesis, every index \(n_i\) divisible by \(\ell\) has \(\ell\) as a simple largest prime divisor, and the corresponding cofactor \(n_i/\ell\) is \(B\)-smooth. Since
		\[
		\ell>C_{\rho,B}=\max\{L_\rho,(\sqrt B+1)^2\},
		\]
		Proposition~\ref{thm:smooth-cofactor-obstruction} applies and gives
		\[
		\rho\mid |I_\ell(\mathbf n)|.
		\]
		The \(\ell\)-row of \(M_\Lambda(\mathbf n)\) is precisely the incidence vector
		\[
		e_\ell(\mathbf n)=(\delta_{\ell\mid n_1},\ldots,\delta_{\ell\mid n_k}),
		\]
		whose Hamming weight is \(|I_\ell(\mathbf n)|\). Hence
		\[
		\langle e_\ell(\mathbf n),(1,\ldots,1)\rangle
		=|I_\ell(\mathbf n)|\equiv 0\pmod \rho.
		\]
		This proves
		\[
		M_\Lambda(\mathbf n)(1,\ldots,1)^t=0.
		\]
		
		Now let \(\ell,\ell'\in\Lambda^\ast\) be distinct. Suppose that \(i\in I_\ell(\mathbf n)\cap I_{\ell'}(\mathbf n)\). Then both \(\ell\) and
		\(\ell'\) divide \(n_i\). By the defining hypothesis on \(\Lambda\), since \(i\in I_\ell(\mathbf n)\), one has
		\[
		\ell=P^+(n_i).
		\]
		Similarly, since \(i\in I_{\ell'}(\mathbf n)\), one has
		\[
		\ell'=P^+(n_i).
		\]
		Thus \(\ell=\ell'\), a contradiction. Therefore
		\[
		I_\ell(\mathbf n)\cap I_{\ell'}(\mathbf n)=\varnothing .
		\]
		
		It follows that the nonzero rows of \(M_\Lambda(\mathbf n)\) have pairwise disjoint supports. Such rows are linearly independent over any field: indeed, if
		\[
		\sum_{\ell\in\Lambda^\ast} a_\ell e_\ell(\mathbf n)=0
		\quad\text{in }\mathbb F_\rho^k,
		\]
		then choosing a coordinate in the support of a fixed nonzero row \(e_\ell(\mathbf n)\) gives \(a_\ell=0\). Hence all \(a_\ell=0\), and therefore
		\[
		\operatorname{rank}_{\mathbb F_\rho}M_\Lambda(\mathbf n)=|\Lambda^\ast|.
		\]
		
		Finally, the sets \(I_\ell(\mathbf n)\), \(\ell\in\Lambda^\ast\), are pairwise disjoint subsets of \(\{1,\ldots,k\}\). Moreover each of them has cardinality divisible by \(\rho\) and is nonempty, so
		\[
		|I_\ell(\mathbf n)|\geq \rho
		\]
		for every \(\ell\in\Lambda^\ast\). Therefore
		\[
		k\geq \sum_{\ell\in\Lambda^\ast}|I_\ell(\mathbf n)|
		\geq \rho|\Lambda^\ast|.
		\]
		Hence
		\[
		|\Lambda^\ast|\leq \left\lfloor \frac{k}{\rho}\right\rfloor .
		\]
		The remaining assertions are immediate consequences of this bound.
	\end{proof}
	
	For the rest of this section, let \(L_\rho=L_\rho(E,P)\) be as in Theorem~\ref{thm:rho-detecting-primitive-divisor}.
	
	\begin{corollary}\label{cor:repeated-large-top-prime}
		Assume that \(D_1\) is divisible by \(2\) or \(3\), and let \(\rho\) be a prime number. Let \(B\geq 2\), and put
		\[
		C_{\rho,B}:=\max\{L_\rho,(\sqrt B+1)^2\}.
		\]
		Let
		\[
		n_i=\ell_i a_i\qquad (1\leq i\leq k),
		\]
		where each \(\ell_i>C_{\rho,B}\) is prime and each \(a_i\) is \(B\)-smooth.
		Then
		\[
		\prod_{i=1}^k D_{n_i}\in(\mathbb Q^\times)^\rho
		\]
		implies that, for every prime \(\ell\) occurring among \(\ell_1,\ldots,\ell_k\),
		\[
		\#\{i:\ell_i=\ell\}\equiv 0\pmod \rho.
		\]
		In particular, each large top prime \(\ell_i\) must occur among the indices \(n_1,\ldots,n_k\) with multiplicity at least \(\rho\). Consequently, if the primes \(\ell_1,\ldots,\ell_k\) are pairwise distinct, then
		\[
		\prod_{i=1}^k D_{n_i}\notin(\mathbb Q^\times)^\rho.
		\]
	\end{corollary}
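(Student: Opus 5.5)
The plan is to deduce this directly from Theorem~\ref{thm:large-top-prime-cluster-packing}, with the choice \(\Lambda:=\{\ell_1,\ldots,\ell_k\}\), the set of distinct large primes occurring. First, the hypothesis that \(D_1\) is divisible by \(2\) or \(3\) gives Hypothesis~\ref{hyp:detecting} for \(\rho\) via Proposition~\ref{prop:large-prime-index-detecting-radical}. The constant \(L_\rho\) there coincides with the constant of Theorem~\ref{thm:rho-detecting-primitive-divisor}, which is defined as the constant supplied by that proposition. Hence the quantity \(C_{\rho,B}\) in the corollary is the same as the one in Theorem~\ref{thm:large-top-prime-cluster-packing}, and every \(\ell\in\Lambda\) satisfies \(\ell>C_{\rho,B}\).

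The main step is to verify the structural hypotheses on \(\Lambda\) and to identify the incidence sets. Since \(C_{\rho,B}\ge(\sqrt B+1)^2>B\), each \(\ell_i>B\). The cofactor \(a_i\) is \(B\)-smooth, so every prime divisor of \(a_i\) is at most \(B<\ell_i\). Therefore \(v_{\ell_i}(n_i)=1\) and \(P^+(n_i)=\ell_i\).

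Next, for \(\ell\in\Lambda\), I claim that \(I_\ell(\mathbf n)=\{i:\ell_i=\ell\}\). Suppose \(\ell\mid n_i=\ell_i a_i\). Since \(\ell>B\), the prime \(\ell\) cannot divide the \(B\)-smooth integer \(a_i\), so \(\ell=\ell_i\). The reverse inclusion is trivial.

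Consequently, for every \(i\in I_\ell(\mathbf n)\) we have \(v_\ell(n_i)=1\), \(\ell=P^+(n_i)\), and \(n_i/\ell=a_i\) is \(B\)-smooth. These are exactly the hypotheses of Theorem~\ref{thm:large-top-prime-cluster-packing}. Part~(1) of that theorem (or directly Proposition~\ref{thm:smooth-cofactor-obstruction}) then gives
\[
\#\{i:\ell_i=\ell\}=|I_\ell(\mathbf n)|\equiv0\pmod\rho .
\]

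For the remaining assertions, let \(\ell\) occur among the \(\ell_i\). Then \(\#\{i:\ell_i=\ell\}\) is a positive multiple of \(\rho\), hence at least \(\rho\ge2\). If the \(\ell_i\) are pairwise distinct, each such count equals \(1\), which is not divisible by \(\rho\). This contradicts the congruence above, so the product cannot be a \(\rho\)-th power.

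I expect no real obstacle. The only point requiring care is the inequality \(\ell_i>B\), which comes from \((\sqrt B+1)^2>B\). It is what ensures that \(\ell\) cannot hide inside some \(a_j\) with \(\ell_j\ne\ell\), and so guarantees that \(I_\ell(\mathbf n)\) is precisely the set of indices with \(\ell_i=\ell\).
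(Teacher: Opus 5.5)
Your proposal is correct and follows the paper's proof essentially verbatim. You use \(\ell_i>C_{\rho,B}\ge(\sqrt B+1)^2>B\) to get \(v_{\ell_i}(n_i)=1\), \(\ell_i=P^+(n_i)\) and \(I_\ell(\mathbf n)=\{i:\ell_i=\ell\}\), then apply Theorem~\ref{thm:large-top-prime-cluster-packing} to the set of distinct \(\ell_i\). Your extra checks (that Hypothesis~\ref{hyp:detecting} holds with the same \(L_\rho\), and the explicit derivation of the ``in particular'' and ``consequently'' clauses) just fill in steps the paper leaves implicit.
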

	
	\begin{proof}
		Since
		\[
		\ell_i>C_{\rho,B}\geq(\sqrt B+1)^2>B,
		\]
		and \(a_i\) is \(B\)-smooth, one has
		\[
		v_{\ell_i}(n_i)=1,\qquad \ell_i=P^+(n_i).
		\]
		Moreover, if \(\ell=\ell_i\) for some \(i\), then \(\ell\) cannot divide \(a_j\) for any \(j\), because \(a_j\) is \(B\)-smooth and \(\ell>B\). Hence
		\[
		I_\ell(\mathbf n)=\{i:\ell_i=\ell\}.
		\]
		Applying the preceding theorem to the set of distinct primes occurring among \(\ell_1,\ldots,\ell_k\), we obtain
		\[
		\rho\mid |I_\ell(\mathbf n)|=\#\{i:\ell_i=\ell\}
		\]
		for each such \(\ell\).
	\end{proof}
	
	The coprime two-term case is the extremal situation in which a large top prime appears in exactly one index. In that case the balancing alternative is impossible, and one obtains a large-prime-gap exclusion.
	
	\begin{proposition}\label{thm:large-prime-gap}
		Assume that \(D_1\) is divisible by \(2\) or \(3\), and let \(\rho\) be a prime number. Let \(m\geq 2\) and \(n\geq 1\) be coprime integers.
		Put
		\[
		\ell=P^+(m),
		\]
		and assume that \(v_\ell(m)=1\). 
		If
		\[
		D_mD_n\in(\mathbb Q^\times)^\rho
		\]
		and \(\ell>L_\rho\), then \(m/\ell>1\) and
		\[
		P^+(m/\ell)\ge(\sqrt{\ell}-1)^2
		=\ell-2\sqrt{\ell}+1.
		\]
		Consequently,
		\[
		(\sqrt{\ell}-1)^2\leq P^+(m/\ell)<\ell.
		\]
		
		Equivalently, if
		\[
		P^+(m/\ell)<(\sqrt{\ell}-1)^2,
		\]
		then
		\[
		D_mD_n\notin(\mathbb Q^\times)^\rho
		\]
		for every integer \(n\geq 1\) coprime to \(m\).
	\end{proposition}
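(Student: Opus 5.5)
This is a two-index case of Proposition~\ref{prop:single-prime-support}. Put $k=2$, $n_1=m$, $n_2=n$. Since $\gcd(m,n)=1$ and $\ell\mid m$, we have $\ell\nmid n$. Hence $I_\ell(\mathbf n)=\{1\}$ and $|I_\ell(\mathbf n)|=1$, which is not divisible by $\rho$. The hypothesis "$D_1$ divisible by $2$ or $3$" gives Hypothesis~\ref{hyp:detecting} through Proposition~\ref{prop:large-prime-index-detecting-radical}. Since $\ell>L_\rho$, Proposition~\ref{prop:single-prime-support} applies to the pair $(m,n)$.

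From that proposition we get two facts:
\[
\operatorname{rad}_{S_{E,P},\rho}(D_\ell)>1
\qquad\text{and}\qquad
\operatorname{rad}_{S_{E,P},\rho}(D_\ell)\mid \frac{m}{\ell}.
\]
The first says the radical has at least one prime divisor $q$. The second then gives $q\mid m/\ell$, so $m/\ell>1$. This already rules out $m=\ell$. The top-prime hypotheses also hold for $i=1$, because $v_\ell(m)=1$ and $\ell=P^+(m)$. So the last clause of Proposition~\ref{prop:single-prime-support} gives
\[
(\sqrt\ell-1)^2\le q<\ell .
\]
Since $q\mid m/\ell$, we have $P^+(m/\ell)\ge q\ge(\sqrt\ell-1)^2$. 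Every prime divisor of $m/\ell$ is a prime divisor of $m$ other than $\ell$, because $v_\ell(m)=1$. Since $\ell=P^+(m)$, each such prime is less than $\ell$, so $P^+(m/\ell)<\ell$. Expanding $(\sqrt\ell-1)^2=\ell-2\sqrt\ell+1$ gives the stated form.

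The "equivalently" clause is the contrapositive. If $P^+(m/\ell)<(\sqrt\ell-1)^2$, then no $n$ coprime to $m$ can make $D_mD_n$ a $\rho$-th power. This also covers $m/\ell=1$, where $P^+(1)=1<(\sqrt\ell-1)^2$ for $\ell\geq 5$. That holds here as long as $L_\rho\ge 4$, which we may assume by enlarging $L_\rho$. The contrapositive needs the standing assumption $\ell>L_\rho$, which is implicit in the statement.

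I do not expect a real obstacle. The only points to watch are the coprimality step, which is what forces $|I_\ell(\mathbf n)|=1$, and the degenerate case $m=\ell$.
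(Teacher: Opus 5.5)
Your proposal is correct and follows the paper's proof essentially step for step: you specialize Proposition~\ref{prop:single-prime-support} to the pair $(m,n)$ with $I_\ell=\{1\}$, take a prime $q$ of the detecting radical dividing $m/\ell$, and combine the Hasse-scale bound on $q$ with $P^+(m/\ell)<\ell$. The differences are cosmetic: you quote the top-prime clause directly where the paper rederives $q\ge(\sqrt\ell-1)^2$ from $\ell\le(\sqrt q+1)^2$, and your remark that taking $L_\rho\ge 4$ lets the contrapositive also cover $m=\ell$ is a harmless refinement.
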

	
	\begin{proof}
		Suppose first that
		\[
		D_mD_n\in(\mathbb Q^\times)^\rho
		\]
		with \((m,n)=1\), and let \(\ell=P^+(m)>L_\rho\). Since \(\ell\mid m\) and \((m,n)=1\), we have \(\ell\nmid n\).
		Hence, for the tuple \((m,n)\),
		\[
		I_\ell(m,n)=\{1\}.
		\]
		Thus \(|I_\ell(m,n)|=1\), and in particular
		\[
		\rho\nmid |I_\ell(m,n)|.
		\]
		
		By Proposition~\ref{prop:single-prime-support}, applied to the tuple \((m,n)\), we obtain
		\[
		\operatorname{rad}_{S_{E,P},\rho}(D_\ell)>1
		\]
		and
		\[
		\operatorname{rad}_{S_{E,P},\rho}(D_\ell)\mid \frac{m}{\ell}.
		\]
		In particular, the quotient \(m/\ell\) cannot be equal to \(1\), since \(rad_{S_{E,P},\rho}(D_\ell)>1\).
		Therefore \(m/\ell>1\). Choose a prime divisor
		\[
		q\mid \operatorname{rad}_{S_{E,P},\rho}(D_\ell).
		\]
		Then \(q\mid m/\ell\), and Proposition~\ref{prop:single-prime-support} also gives the Hasse bound constraint
		\[
		\ell\leq q+1+2\sqrt q=(\sqrt q+1)^2.
		\]
		It follows that
		\[
		q\geq(\sqrt{\ell}-1)^2.
		\]
		Since \(q\mid m/\ell\), we have
		\[
		P^+(m/\ell)\geq q\geq(\sqrt{\ell}-1)^2.
		\]
		
		On the other hand, since \(\ell=P^+(m)\) and \(v_\ell(m)=1\), every prime divisor of \(m/\ell\) is strictly smaller than \(\ell\). Hence
		\[
		P^+(m/\ell)<\ell.
		\]
		This proves
		\[
		(\sqrt{\ell}-1)^2\leq P^+(m/\ell)<\ell.
		\]
		
		The final assertion is the contrapositive. If
		\[
		P^+(m/\ell)<(\sqrt{\ell}-1)^2,
		\]
		then the preceding necessary condition fails, and therefore no relation
		\[
		D_mD_n\in(\mathbb Q^\times)^\rho
		\]
		can hold with \((m,n)=1\).
	\end{proof}
	
	\begin{corollary}\label{cor:smooth-cofactor-exclusion}
		Assume that \(D_1\) is divisible by \(2\) or \(3\), and let \(\rho\) be a prime number. Let \(B\geq 2\), and let \(m\geq 2\).
		Put
		\[
		\ell=P^+(m),
		\]
		and assume that \(v_\ell(m)=1\). Suppose that \(m/\ell\) is \(B\)-smooth.
		
		If
		\[
		\ell>L_\rho
		\qquad\text{and}\qquad
		\ell>(\sqrt B+1)^2,
		\]
		then
		\[
		D_mD_n\notin(\mathbb Q^\times)^\rho
		\]
		for every integer \(n\geq 1\) coprime to \(m\).
		
		Equivalently, if there exists \(n\geq 1\) with \((m,n)=1\) such that
		\[
		D_mD_n\in(\mathbb Q^\times)^\rho,
		\]
		then
		\[
		\ell\leq \max\{L_\rho,(\sqrt B+1)^2\}.
		\]
	\end{corollary}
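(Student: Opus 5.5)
This follows by specializing Proposition~\ref{thm:large-prime-gap}, or equivalently by applying Proposition~\ref{thm:smooth-cofactor-obstruction} to the two-term tuple \((m,n)\). I will prove the first formulation by contradiction and then observe that the second formulation is its contrapositive. Throughout, \(L_\rho\) is the constant of Theorem~\ref{thm:rho-detecting-primitive-divisor}. Under the assumption that \(D_1\) is divisible by \(2\) or \(3\), Proposition~\ref{prop:large-prime-index-detecting-radical} shows that this constant witnesses Hypothesis~\ref{hyp:detecting}.

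Suppose \(n\ge 1\) is coprime to \(m\) and \(D_mD_n\in(\mathbb Q^\times)^\rho\). Since \(\ell>L_\rho\), Proposition~\ref{thm:large-prime-gap} applies. It gives \(m/\ell>1\) and
\[
P^+(m/\ell)\ge(\sqrt\ell-1)^2 .
\]
On the other hand, \(m/\ell\) is \(B\)-smooth, so \(P^+(m/\ell)\le B\). Combining the two inequalities gives \((\sqrt\ell-1)^2\le B\). Because \(\ell>(\sqrt B+1)^2\ge 1\), we have \(\sqrt\ell-1>0\), so this is equivalent to \(\sqrt\ell\le\sqrt B+1\), that is, \(\ell\le(\sqrt B+1)^2\). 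This contradicts the hypothesis \(\ell>(\sqrt B+1)^2\). Hence no such \(n\) exists.

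As a cross-check, the same conclusion comes out of Proposition~\ref{thm:smooth-cofactor-obstruction} directly. For the tuple \((m,n)\), coprimality gives \(I_\ell(m,n)=\{1\}\). The index \(m\) satisfies \(v_\ell(m)=1\) and \(\ell=P^+(m)\), and \(m/\ell\) is \(B\)-smooth. That proposition therefore forces \(\rho\mid 1\), which is impossible.

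For the equivalent formulation, assume some \(n\) coprime to \(m\) gives a \(\rho\)-th power. The first part then shows that \(\ell>L_\rho\) and \(\ell>(\sqrt B+1)^2\) cannot both hold, which is exactly \(\ell\le\max\{L_\rho,(\sqrt B+1)^2\}\).

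The only step needing care is the passage from \((\sqrt\ell-1)^2\le B\) to \(\ell\le(\sqrt B+1)^2\), which requires \(\sqrt\ell\ge 1\); this is immediate since \(\ell\) is a prime. One should also check that the \(L_\rho\) fixed before Corollary~\ref{cor:repeated-large-top-prime} is the same constant that makes Hypothesis~\ref{hyp:detecting} hold; by construction in Theorem~\ref{thm:rho-detecting-primitive-divisor}, it is.
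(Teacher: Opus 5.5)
Your proof is correct and is essentially the paper's argument. You invoke Proposition~\ref{thm:large-prime-gap} to get \(P^+(m/\ell)\ge(\sqrt\ell-1)^2\), compare this with \(P^+(m/\ell)\le B\), conclude \(\ell\le(\sqrt B+1)^2\), and take the contrapositive; the paper only proves the ``equivalently'' form first, and your cross-check via Proposition~\ref{thm:smooth-cofactor-obstruction} with \(I_\ell(m,n)=\{1\}\) is also valid.
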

	
	\begin{proof}
		Assume that there exists \(n\geq 1\) with \((m,n)=1\) such that
		\[
		D_mD_n\in(\mathbb Q^\times)^\rho.
		\]
		If \(\ell\leq L_\rho\), there is nothing to prove. Suppose that \(\ell>L_\rho\).
		By Proposition~\ref{thm:large-prime-gap},
		\[
		P^+(m/\ell)\geq(\sqrt{\ell}-1)^2.
		\]
		Since \(m/\ell\) is \(B\)-smooth, we have
		\[
		P^+(m/\ell)\leq B.
		\]
		Therefore
		\[
		(\sqrt{\ell}-1)^2\leq B.
		\]
		Equivalently,
		\[
		\ell\leq(\sqrt B+1)^2.
		\]
		Hence
		\[
		\ell\leq\max\{L_\rho,(\sqrt B+1)^2\}.
		\]
		
		The exclusion formulation is the contrapositive.
	\end{proof}
	
	Taking \(\rho=2\) recovers the square-product version of the preceding obstructions.
	In this case the condition \(\rho\nmid |I_\ell(\mathbf n)|\) means that \(|I_\ell(\mathbf n)|\) is odd, and	\(\operatorname{rad}_{S_{E,P},2}\) is the squarefree part outside \(S_{E,P}\).
	
	\begin{corollary}\label{cor:square-case}
		Assume that \(D_1\) is divisible by \(2\) or \(3\). Then there exists a constant \(L_2=L_2(E,P)\) such that the following holds.
		
		Let \(n_1,\ldots,n_k\geq 1\) satisfy
		\[
		\prod_{i=1}^k D_{n_i}\in (\mathbb Q^\times)^2 .
		\]
		Let \(\ell>L_2\) be a prime such that \(|I_\ell(\mathbf n)|\) is odd.
		Then
		\[
		\operatorname{sqf}_{S_{E,P}}(D_\ell)>1
		\]
		and
		\[
		\operatorname{sqf}_{S_{E,P}}(D_\ell)
		\mid
		\prod_{i\in I_\ell(\mathbf n)}\frac{n_i}{\ell}.
		\]
		Moreover, every prime divisor \(q\) of \(\operatorname{sqf}_{S_{E,P}}(D_\ell)\) satisfies
		\[
		\ell\leq q+1+2\sqrt q.
		\]
		If, in addition, every \(i\in I_\ell(\mathbf n)\) satisfies \(v_\ell(n_i)=1\) and has \(\ell\) as the largest prime divisor of \(n_i\), then every such prime divisor \(q\) satisfies
		\[
		(\sqrt{\ell}-1)^2\leq q<\ell.
		\]
	\end{corollary}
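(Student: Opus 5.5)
The plan is to specialise the general machinery to \(\rho=2\). First, I will apply Proposition~\ref{prop:large-prime-index-detecting-radical} with \(\rho=2\). Under the hypothesis that \(D_1\) is divisible by \(2\) or \(3\), it shows that Hypothesis~\ref{hyp:detecting} holds for \(\rho=2\). I will define \(L_2=L_2(E,P)\) to be the constant \(L_\rho\) it supplies; equivalently, this is the constant of Theorem~\ref{thm:rho-detecting-primitive-divisor} at \(\rho=2\). This constant depends only on \((E,P)\) and the fixed set \(S_{E,P}\), as the statement requires.

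Next, I will translate the two hypotheses. With \(\rho=2\), the condition \(\rho\nmid|I_\ell(\mathbf n)|\) says exactly that \(|I_\ell(\mathbf n)|\) is odd. By the definition in Section~2, \(\operatorname{sqf}_{S_{E,P}}=\operatorname{rad}_{S_{E,P},2}\). A square relation \(\prod D_{n_i}\in(\mathbb Q^\times)^2\) is the case \(\rho=2\) of the power relation.

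Then Proposition~\ref{prop:single-prime-support}, applied with \(\rho=2\) and \(\ell>L_2\), yields all four assertions directly:
\begin{itemize}
\item nontriviality, \(\operatorname{sqf}_{S_{E,P}}(D_\ell)>1\);
\item the divisibility \(\operatorname{sqf}_{S_{E,P}}(D_\ell)\mid\prod_{i\in I_\ell(\mathbf n)} n_i/\ell\);
\item the Hasse-scale bound \(\ell\le q+1+2\sqrt q\);
\item under the top-prime hypotheses, the window \((\sqrt\ell-1)^2\le q<\ell\).
\end{itemize}

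There is no genuine obstacle; the only care needed is bookkeeping. The constant \(L_2\) must be the one from the detecting input at \(\rho=2\), so that Proposition~\ref{prop:single-prime-support} applies verbatim. The identification of \(\operatorname{sqf}\) with \(\operatorname{rad}_{\cdot,2}\) must also be recorded explicitly, so that each conclusion transfers without change.
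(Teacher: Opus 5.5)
Your proposal is correct and matches the paper's proof: the corollary is Proposition~\ref{prop:single-prime-support} with \(\rho=2\), using \(\operatorname{rad}_{S_{E,P},2}=\operatorname{sqf}_{S_{E,P}}\) and reading \(2\nmid|I_\ell(\mathbf n)|\) as the parity condition. You make explicit that \(L_2\) is the constant supplied by Proposition~\ref{prop:large-prime-index-detecting-radical} at \(\rho=2\), a step the paper leaves implicit in its standing convention on \(L_\rho\).
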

	
	\begin{proof}
		This is Proposition~\ref{prop:single-prime-support} with \(\rho=2\), since
		\[
		\operatorname{rad}_{S_{E,P},2}(D_\ell)
		=
		\operatorname{sqf}_{S_{E,P}}(D_\ell),
		\]
		and the condition \(2\nmid |I_\ell(\mathbf n)|\) is precisely that \(|I_\ell(\mathbf n)|\) is odd.
	\end{proof}

	\section*{Statements and Declarations}
	
	\noindent\textbf{Competing interests.}
	The author declares no competing interests.
	
	\noindent\textbf{Funding.}
	No funding was received for this work.
	
	\noindent\textbf{Data availability.}
	Data sharing is not applicable to this article, as no datasets were generated
	or analysed.


\begin{thebibliography}{99}
		
		\bibitem{Alfaraj2023}
		A. Alfaraj,
		\emph{On the finiteness of perfect powers in elliptic divisibility sequences},
		J. Th\'eor. Nombres Bordeaux \textbf{35} (2023), no. 1, 247--258.
		
		\bibitem{BiluHanrotVoutier2001}
		Y. Bilu, G. Hanrot, and P. M. Voutier,
		\emph{Existence of primitive divisors of Lucas and Lehmer numbers},
		J. Reine Angew. Math. \textbf{539} (2001), 75--122;
		with an appendix by M. Mignotte.
		
		\bibitem{Carmichael1913}
		R. D. Carmichael,
		\emph{On the numerical factors of the arithmetic forms \(\alpha^n \pm \beta^n\)},
		Ann. of Math. (2) \textbf{15} (1913), no. 1/4, 30--70.
		
		\bibitem{EverestMcLarenWard2006}
		G. Everest, G. McLaren, and T. Ward,
		\emph{Primitive divisors of elliptic divisibility sequences},
		J. Number Theory \textbf{118} (2006), no. 1, 71--89.
		
		\bibitem{EverestReynoldsStevens2007}
		G. Everest, J. Reynolds, and S. Stevens,
		\emph{On the denominators of rational points on elliptic curves},
		Bull. Lond. Math. Soc. \textbf{39} (2007), no. 5, 762--770.
		
		\bibitem{EverestPoortenShparlinskiWard2003}
		G. Everest, A. van der Poorten, I. Shparlinski, and T. Ward,
		\emph{Recurrence Sequences},
		Mathematical Surveys and Monographs, vol. 104,
		American Mathematical Society, Providence, RI, 2003.
		
		\bibitem{HajduLaishramSzikszai2016}
		L. Hajdu, S. Laishram, and M. Szikszai,
		\emph{Perfect powers in products of terms of elliptic divisibility sequences},
		Bull. Aust. Math. Soc. \textbf{94} (2016), no. 3, 395--404.
		
		\bibitem{NowrooziSiksek2024}
		M. Nowroozi and S. Siksek,
		\emph{Perfect powers in elliptic divisibility sequences},
		Bull. Lond. Math. Soc. \textbf{56} (2024), no. 11, 3331--3345.
		
		\bibitem{Reynolds2012}
		J. Reynolds,
		\emph{Perfect powers in elliptic divisibility sequences},
		J. Number Theory \textbf{132} (2012), no. 5, 998--1015.
		
		\bibitem{Silverman2005}
		J. H. Silverman,
		\emph{\(p\)-adic properties of division polynomials and elliptic divisibility sequences},
		Math. Ann. \textbf{332} (2005), no. 2, 443--471.
		
		\bibitem{Ward1948}
		M. Ward,
		\emph{Memoir on elliptic divisibility sequences},
		Amer. J. Math. \textbf{70} (1948), no. 1, 31--74.
		
	\end{thebibliography}
\end{document}